\newtheorem{thm}{Theorem}[section]
\newtheorem{prop}[thm]{Proposition}
\newtheorem{lemma}[thm]{Lemma}
\newtheorem{corol}[thm]{Corollary}
\theoremstyle{definition}
\newtheorem{defin}[thm]{Definition}
\newtheorem{remark}[thm]{Remark}
\newtheorem{example}[thm]{Example}
\renewcommand{\Im}{\mathrm{Im} \;}  
\newcommand{\uR}{\underline{\mathrm{R}}}
\newcommand{\rank}{\mathrm{rank}}
\newcommand{\aR}{\mathrm{R}^{\kern-1pt\scriptscriptstyle\otimes}}
\newcommand{\uaR}{\underline{\mathrm{R}}^{\kern-1pt\scriptscriptstyle\otimes}}
\newcommand{\xto}[1]{\xrightarrow{\phantom{a}{#1}{\phantom{a}}}}
\newcommand{\vvirg}{ , \dots , }
\newcommand{\ootimes}{ \otimes \cdots \otimes }
\newcommand{\ttimes}{ \times \cdots \times }
\newcommand{\dotitem}{ \item[$\cdot$]}
\newcommand{\rmR}{\mathrm{R}}
\newcommand{\bbC}{\mathbb{C}}
\newcommand{\bbP}{\mathbb{P}}
\newcommand{\bbZ}{\mathbb{Z}}
\newcommand{\bbN}{\mathbb{N}}
\newcommand{\frakS}{\mathfrak{S}}
\newcommand{\frakm}{\mathfrak{m}}
\newcommand{\scrS}{\mathscr{S}}
\newcommand{\bfx}{\mathbf{x}}
\newcommand{\textsum}{{\textstyle \sum}}
\newcommand{\textfrac}[2]{{\textstyle \frac{#1}{#2}}}
\renewcommand{\phi}{\varphi}
\newcommand{\eps}{\varepsilon}
\renewcommand{\theta}{\vartheta}
\renewcommand{\hat}[1]{\widehat{#1}}
\renewcommand{\bar}[1]{\overline{#1}}
\DeclareMathOperator{\Hom}{Hom}
\DeclareMathOperator{\mult}{mult}
\DeclareMathOperator{\codim}{codim}
\newcommand{\dashto}{\dashrightarrow}
\newcommand{\id}{\mathrm{id}}
\title{Border rank is not multiplicative under the tensor product}
\author[M. Christandl]{Matthias Christandl}
\author[F. Gesmundo]{Fulvio Gesmundo}
\author[A. K. Jensen]{Asger Kj{\ae}rulff Jensen}
\address[M. Christandl, F. Gesmundo, A. K. Jensen]{QMATH - Dept. of Math. Sciences, Univ. of Copenhagen, Universitetsparken 5, 2100, Copenhagen, Denmark}
\email[Christandl]{christandl@math.ku.dk}
\email[Gesmundo]{fulges@math.ku.dk}
\email[Jensen]{akj@math.ku.dk}
\keywords{tensor rank, border rank, Young flattening, entanglement}
\subjclass[2010]{14M20, 15A69, 15A72}
\begin{document}
\begin{abstract}
 It has recently been shown that the tensor rank can be strictly submultiplicative under the tensor product, where the tensor product of two tensors is a tensor whose order is the sum of the orders of the
two factors. The necessary upper bounds were obtained with help of border rank. It was left open whether border rank itself can be strictly submultiplicative. We answer this question in the affirmative. In order to do so, we construct lines in projective space along which the border rank drops multiple times and use this result in conjunction with a previous construction for a tensor rank drop. Our results also imply strict submultiplicativity for cactus rank and border cactus rank.
\end{abstract}

 \maketitle
 
 \section{Introduction}\label{section: intro}
 
We work over the complex numbers and we will point out when our results apply in higher generality. Given vector spaces $V_1 \vvirg V_k$ and a tensor $T \in V_1 \ootimes V_k$, the \emph{tensor rank} of $T$ is 
 \[
 \rmR(T) = \min \left\{r: T = \textsum_{i = 1}^r v_1^{(i)} \ootimes v_{k}^{(i)}, \text{ for some } v_j^{(i)} \in V_j\right\}.
\]
If $k=2$, then $\rmR(T) = \rank(T)$ where $T$ is regarded as a linear map $T: V_1^{*} \to V_2$; in this sense, tensor rank is a generalization of matrix rank.

The \emph{tensor border rank} (\emph{border rank}, for short) of $T$ is 
\[
 \uR(T) = \min \left\{ r : T = \lim_{\eps \to 0} T_\eps \text{ where, for every $\eps$, $\rmR(T_\eps) = r$} \right\}
\]
and the limit is taken in the Euclidean topology of $V_1 \ootimes V_k$. Clearly $ \uR(T) \leq \rmR(T)$ and there are many examples where the inequality is strict.

It is straightforward to verify that rank and border rank are submultiplicative under the tensor product: if $T_1$ and $T_2$ are tensors of order $k_1,k_2$ respectively, then $T_1 \otimes T_2$ is a tensor of order $k_1 + k_2$ satisfying $\rmR(T_1 \otimes T_2) \leq \rmR(T_1) \rmR(T_2)$ and $\uR(T_1 \otimes T_2) \leq \uR(T_1) \uR(T_2)$. Recently, \cite{ChrJenZui:NonMultiplicativityTensorRank} answered a question posed in \cite{Draisma:MultAlgebraLectureNotes} and provided the first example showing that submultiplicativity of rank can be strict, namely $\rmR(T_1 \otimes T_2) < \rmR(T_1) \rmR(T_2)$.

The analogous question for border rank, namely whether border rank can be strictly multiplicative under tensor product, remained open and we answer it in this paper. Specifically, we provide an example of a tensor $T$ such that $\uR(T) = 5$ and $\uR(T \otimes T) \leq 24$. We obtain
\begin{thm}\label{thm: not multiplicative}
 Border rank is not multiplicative under the tensor product.
\end{thm}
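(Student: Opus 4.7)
The overall plan is to exhibit a single concrete tensor $T$ and carry out two independent calculations: a lower bound $\uR(T) \geq 5$ (matched by an explicit rank-$5$ decomposition) together with an upper bound $\uR(T \otimes T) \leq 24$. The combined inequalities $25 = \uR(T)^2 > 24 \geq \uR(T \otimes T)$ then directly witness strict submultiplicativity. For the lower bound I would use a \emph{Young flattening}, as the keyword suggests: one associates to $T$ an equivariant linear map between suitable Schur--Weyl representation spaces whose rank is semicontinuous and therefore gives a lower bound on $\uR(T)$ which is stable under border approximation. The candidate $T$ should be chosen so that such a flattening (for instance a skew-symmetric one in a small format) has enough rank to force $\uR(T) \geq 5$, while a rank-$5$ expression for $T$ is immediate from the defining ansatz.

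The real content is the upper bound $\uR(T \otimes T) \leq 24$. Naively tensoring any border-rank-$5$ approximating family for $T$ with itself produces a rank-$25$ approximation for $T \otimes T$, so one has to save exactly one rank by introducing a cancellation. The mechanism hinted at in the abstract --- lines in projective space along which border rank drops multiple times --- suggests the following strategy: find an affine line $L(\eps) = A + \eps B$ in the tensor space such that for generic $\eps$ the border rank along $L$ equals some value $r$, while at least two distinct points of $L$ (for instance $L(0)$ and the point at infinity) have border rank strictly less than $r$. Such a double drop, in the spirit of the rank construction of \cite{ChrJenZui:NonMultiplicativityTensorRank}, can then be exploited to rewrite $T \otimes T$ as a limit of rank-$24$ tensors: one of the $25$ ``diagonal'' summands produced by tensoring two different border decompositions of $T$ is absorbed into the remaining $24$ through a telescoping identity that holds precisely because the line has two degenerate points and not just one.

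The main obstacle, and where the bulk of the work lies, is engineering a tensor $T$ whose rank-$5$ approximating families admit enough structural coincidence (embodied by such a line of multiple border rank drops) to support the one-term cancellation in $T \otimes T$. The candidates to try first are small unit-tensor-like objects, e.g. degenerations of $\langle 2,2,2\rangle$ or $W$-state-type tensors, which are already known in the literature to lie on loci where border rank decreases along algebraic curves. Once the explicit parameterized family realizing $\uR(T\otimes T) \leq 24$ is written down, the remaining verification is a routine algebraic check of rank-$1$ summands and their limit, and Theorem~\ref{thm: not multiplicative} follows immediately. The claims about cactus and border cactus rank in the abstract should then drop out of the same example, since Young flattenings provide lower bounds for those invariants as well.
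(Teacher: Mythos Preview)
Your plan matches the paper's approach: a Koszul/Young flattening certifies $\uR(T)=5$, and an algebraic identity along a special line produces $\uR(T\otimes T)\le 24$. But as stated there is a genuine gap in the line mechanism. Two drop points on the line are not enough. The identity that actually saves a term is
\[
T\otimes T=(T-2Z)^{\otimes 2}+(T-Z)\otimes 2Z+2Z\otimes(T-Z),
\]
and for the right-hand side to have border rank $\le 16+4+4=24$ the line through $T$ must carry a point $Z$ of rank~$1$ \emph{and} two further points $T-Z$, $T-2Z$ each of border rank~$4$ --- three special points, not two. With only one border-rank-$4$ point together with the rank-$1$ point the bound becomes $16+5+5=26$; with two border-rank-$4$ points but no rank-$1$ direction the cross terms are far too expensive. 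The paper isolates exactly this as the ``secant multidrop'' condition: the line must meet $X$ and meet $\sigma_4(X)$ in at least two further points.

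Your proposed search space is also off target: degenerations of $\langle 2,2,2\rangle$ and $W$-type tensors do not support this structure (for $W$ the line through the rank-$1$ point lies entirely in $\sigma_2$, so there is no generic point of higher border rank to play the role of $T$). The paper's example lives in $\bbC^3\otimes\bbC^3\otimes\bbC^3$: one takes $T$ to be four ``diagonal'' rank-$1$ terms plus $2Z$ for a fifth rank-$1$ tensor $Z$, so that $\uR(T-2Z)\le 4$ is immediate from the expression and the Koszul flattening $A\otimes B^*\to\Lambda^2A\otimes C$ has full rank~$9$, forcing $\uR(T)\ge\lceil 9/2\rceil=5$. The genuinely hard step --- the one you correctly flag as the main obstacle --- is establishing $\uR(T-Z)\le 4$, which the paper does by producing an explicit rank-$4$ decomposition found via a cyclic $\bbZ/3$ symmetry of the tensor; this does not fall out of any standard family.
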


In the spirit of Strassen's asymptotic rank conjecture (see \cite{Str:DegComplexityBilMaps}), the existence of examples that verify strict submultiplicativity of rank and border rank motivates the definition of a \emph{tensor asymptotic rank} of a tensor:
\begin{equation}\label{eqn: asymptotic rank def}
\aR (T) = \lim_{k \to \infty} [\rmR(T^{\otimes k})]^{1/k}. 
\end{equation}
This notion is different from the asymptotic rank $\uwave{\rmR}$ defined in \cite{Str:AsySpectrumTensorsExpMatMult}. In that case, the object of study is the asymptotic rank under Kronecker product (or flattened tensor product, denoted by $\boxtimes$) where the $k$-th tensor power of $T \in V_1 \ootimes V_\ell$ is regarded as a tensor of order $\ell$ in $(V_1^{\otimes k}) \ootimes (V_\ell^{\otimes k})$, denoted $T^{\boxtimes k}$. In \eqref{eqn: asymptotic rank def}, $T^{\otimes k}$ is regarded a tensor of order $\ell k$. In particular, the word \emph{tensor} in ``tensor asymptotic rank'' refers to the fact that we are considering tensor powers. The tensors $T^{\otimes k}$ and $T^{\boxtimes k}$ are formally the same object but there is a difference in the choice of rank $1$ tensors that are used to compute their ranks: in both cases, the rank is computed (after reordering) with respect to rank $1$ tensors of the form $Z_1 \ootimes Z_\ell$ with $Z_j \in V_j^{\otimes k}$ with the important difference that in the case of $T^{\boxtimes k}$ no condition on $Z_1 \vvirg Z_\ell$ is given, whereas in the case of $T^{\otimes k}$ we require that every $Z_j$ is a rank $1$ tensor in $V_j \ootimes V_j$, namely $Z_j = v_{j,1} \ootimes v_{j,k}$. In particular, it is clear that $\rmR(T^{\boxtimes k}) \leq \rmR(T^{\otimes k})$ and therefore $\uwave{\rmR}(T) \leq \aR (T)$.

A consequence of \cite{ChrJenZui:NonMultiplicativityTensorRank} is that the inequality $\aR(T) \leq \rmR(T)$ can be strict. Moreover, Theorem 8 in \cite{ChrJenZui:NonMultiplicativityTensorRank} guarantees that $\aR(T) = \lim_{k \to \infty} [\uR(T^{\otimes k})]^{1/k}$, namely the definition of the tensor asymptotic rank does not depend on considering rank or border rank and in particular $\aR(T) \leq \uR(T)$; Theorem \ref{thm: not multiplicative} shows that the inequality $\aR(T) \leq \uR(T)$ can be strict as well. 

The possible gap between $\uR(T)$ and $\uwave{\rmR}(T)$ is due to two different phenomena: Theorem \ref{thm: not multiplicative} shows that in general there might a gap due to tensoring together several copies of a tensor, namely $\uR(T^{\otimes k})^{1/k} < \uR(T)$; on the other hand, there might be an additional gap due to passing from tensor product to Kronecker product, namely $\uR(T^{\boxtimes k}) < \uR(T^{\otimes k})$ (this follows from flattening lower bounds multiplicativity, as in \cite{ChrJenZui:NonMultiplicativityTensorRank}). In summary, we have the sequence of inequalities
\[
 \uwave{\rmR}(T) \leq \aR(T) \leq \uR(T) \leq \rmR(T),
\]
and each of these inequalities can be strict in some cases.

It is natural to ask whether asymptotic rank itself is multiplicative under the tensor product (this problem was posed in \cite{ChrJenZui:NonMultiplicativityTensorRank}), i.e. whether 
\[
\uwave{\rmR}(T \otimes T') = \uwave{\rmR} (T) \uwave{\rmR}(T').  
\]
We leave this question open, and only point out that an answer can in principle be found with help of the asymptotic spectrum of tensors, which is able to characterize the asymptotic rank \cite{Str:AsySpectrumTensors}. The best known lower bound on $\uwave{\rmR}$ is the so-called maximal local dimension and it is consistent with current knowledge that this bound is sharp \cite{ChrVraZui:UniversalPtsAsySpecTensors}. In the case of tight tensors (in the sense of \cite{Str:DegComplexityBilMaps}), this is a conjecture of Strassen. If this was true, multiplicativity would be immediate. Note that the asymptotic rank is not multiplicative under the Kronecker tensor product. 

However, multiplicativity of flattening lower bounds \cite{ChrJenZui:NonMultiplicativityTensorRank} provides nontrivial lower bounds on the tensor asymptotic rank whenever there is a flattening map providing lower bounds higher than the local dimension of the tensor. In particular, in Proposition \ref{prop: multiplicativity for m plus 1}, we provide an explicit example of a tensor in $\bbC^m \otimes \bbC^m \otimes \bbC^m$ with $\aR(T) = m+1$.

The definition of the tensor asymptotic rank is motivated by the importance of the asymptotic rank $\uwave{\rmR}$ in the study of the complexity of tensors. In algebraic complexity theory, there is great interest in understanding the complexity of performing the multiplication of two matrices. This complexity is asymptotically controlled by the asymptotic rank of the matrix multiplication tensor $M_{\langle n \rangle} \in \bbC^{n^2} \otimes \bbC^{n^2} \otimes \bbC^{n^2}$, that is the bilinear map sending a pair of matrices to their product. It is known that $\uwave{\rmR}(M_{\langle n \rangle}) = n^\omega$ for a constant $\omega$, the so-called exponent of matrix multiplication; \cite{Bini:RelationsExactApproxBilAlg} showed that $\uwave{\rmR}(M_{\langle n \rangle}) < \uR(M_{\langle n \rangle})$ for every $n$ and it is conjectured that $\omega = 2$ in the computer science community; the current state of the art is $2 \leq \omega < 2.37287$ \cite{Stot:ComplexityMaMu,Wil:FasterThanCW,LeGallPowersTensorsFastMaMu}. More generally, Strassen's asymptotic rank conjecture states that $\uwave{\rmR}(T) = m$ for every \emph{tight, concise} tensor in $\bbC^m \otimes \bbC^m \otimes \bbC^m$; we refer to \cite{Str:DegComplexityBilMaps,ChrVraZui:UniversalPtsAsySpecTensors,ConGesLanVenWan:GeometryStrassenAsyRankConj} for the details on this topic. 

We expect that a better understanding of the tensor asymptotic rank will lead to a better understanding of the gap in $\uwave{\rmR}(M_{\langle n \rangle}) < \uR(M_{\langle n \rangle})$, and more generally in other cases of interest.

Another field where submultiplicativity properties of tensor rank and border rank play a role is quantum information theory, where the state of a quantum system is represented as a vector in a Hilbert space and the state of a composite system is an element of the tensor product of the Hilbert spaces corresponding to its constituents. In this theory, tensor rank is a natural measure of the entanglement of the quantum system \cite{DurVidCir:TrheeCubitsEntTwoIneqWays,EisBri:SchmidtMeasureToolQuantifyingMultipEntangl,BernCaru:AlgGeomToolsEntanglement}. In particular, strict submultiplicativity properties of tensor rank reflect the fact that in a quantum system formed by multiple independent constituents, the entanglement is not simply ``the sum'' of the entanglement of the different parts (see also \cite{YuChiGuoDu:TensorRankTripartiteW,BalBerChrGes:PartiallySymRkW}).

A similar unexpected consequence of strict submultiplicativity can be found in the geometric interpretation of the quantum broadcast model in communication complexity (see, e.g., \cite{BuhChrZui:NondetQCC_CyclicEqGameIMM}). Here, the tensor encodes a Boolean function that distant parties have to jointly compute using as little quantum communication as possible (measured in terms of entanglement). Submultiplicativity properties of rank and border rank show that if two groups of parties play two independent games of this type, then a joint strategy can be advantageous compared to two independent optimal strategies. In \cite{BuhChrZui:NondetQCC_CyclicEqGameIMM}, the border rank version of this protocol is discussed as well; the results of this paper show that a joint strategy can be advantageous even in the border rank setting.

From the geometric point of view, border rank is directly related to the study of secant varieties of the Segre product of projective spaces, a topic that has been studied in the algebraic geometry community for over a century (see, e.g., \cite{Lan:TensorBook}). Indeed, one can define rank and border rank with respect to any algebraic variety (see Section \ref{section: preliminaries}) and address the submultiplicativity problem in much higher generality. It is interesting to observe that one of the arguments used in \cite{ChrJenZui:NonMultiplicativityTensorRank} to prove tensor rank strict submultiplicativity already contains the seed of the argument that can be applied in complete generality. For this reason, we will study the problem from the more general perspective of secant varieties and $X$-rank (in the sense of Section \ref{section: preliminaries}). In this context, one can easily obtain examples of $X$-rank and $X$-border rank submultiplicativity when $X$ is a set of distinct points (\cite{Sko:personalComm} -- we briefly show this example in Remark \ref{remark: drop with distinct points}).

The paper is structured as follows. In Section \ref{section: preliminaries}, first we briefly retrace that argument of \cite{ChrJenZui:NonMultiplicativityTensorRank} focusing on the elements that can be transferred to the border rank setting, then we introduce some basic notions from algebraic geometry that will be useful in the rest of the paper. In Section \ref{section: P2xP2xP2 basic}, we present an example of border rank strict submultiplicativity, which implies Theorem \ref{thm: not multiplicative}. In Section \ref{section: secant multidrop}, we present the more general geometric argument that led to the example presented in Section \ref{section: P2xP2xP2 basic} and we prove some results providing sufficient conditions under which the argument can be applied. In Section \ref{section: two hypersurface examples}, we discuss several examples where these sufficient conditions are satisfied, giving additional examples of border rank strict submultiplicativity, some of which involve the geometry of algebraic curves.  In Section \ref{section:AsymptoticRank}, we define a more general notion of asymptotic rank, that extends the definition given above to every algebraic variety and we prove some results about this quantity. Finally, Section \ref{section: other varieties} is dedicated to additional examples and numerical results.

\subsection*{Acknowledgments} We thank L. Chiantini for suggesting to investigate the example of elliptic curves, T. Fisher for the references on this topic and K. Kordek for helpful discussions. We thank J. Skowera and J. Zuiddam for discussions on $X$-rank and $X$-border rank multiplicativity. We acknowledge financial support from the European Research Council (ERC Grant Agreement no. 337603), the Danish Council for Independent Research (Sapere Aude), and VILLUM FONDEN via the QMATH Centre of Excellence (Grant no. 10059).
  
 \section{Preliminaries}\label{section: preliminaries}
 
 In this section we introduce some of the tools that we will use in the paper and we present the known results about rank strict submultiplicativity that led us to the construction to prove border rank strict submultiplicativity.

\subsection{Rank submultiplicativity} \label{subsec: rank submult}Proposition 13 in \cite{ChrJenZui:NonMultiplicativityTensorRank} provides a minimal example of strict submultiplicativity of tensor rank. This construction is an optimized version of the more general interpolation argument of Theorem 8 in \cite{ChrJenZui:NonMultiplicativityTensorRank}. Here, we present that example pointing out the key property that allows us to use a similar argument in the case of border rank.
  
  Let $A,B,C$ be three $2$-dimensional vector spaces; let $a_1,a_2$ be a basis of $A$, $b_1,b_2$ a basis of $B$ and $c_1,c_2$ a basis of $C$. Let $W = a_2 \otimes b_1 \otimes c_1 + a_1 \otimes b_2 \otimes c_1 + a_1 \otimes b_1 \otimes c_2 \in A \otimes B \otimes C$. It is known that $\rmR(W) = 3$ and $\rmR(W + \eps a_2 \otimes b_2 \otimes c_2) = 2$ for every $\eps \neq 0$.
 
We obtain the following expression for $W^{\otimes 2}$:
\begin{equation}\label{eqn: construction W drop}
\begin{aligned}
 W \otimes W = (W - a_2 \otimes b_2 \otimes c_2) ^ {\otimes 2} &+ (W - \textfrac{1}{2} a_2 \otimes b_2 \otimes c_2) \otimes (a_2 \otimes b_2 \otimes c_2) \\ &+ (a_2 \otimes b_2 \otimes c_2) \otimes (W - \textfrac{1}{2} a_2 \otimes b_2 \otimes c_2).
\end{aligned}
\end{equation}
providing $\rmR(W \otimes W) \leq 2 \cdot 2 + 2 \cdot 1 + 1 \cdot 2 = 8 < 9 = 3 \cdot 3$, which gives an example of strict submultiplicativity of tensor rank. Following this proof, \cite{ChFri:TensorRankTensorProductTwoW} proved that $\rmR(W \otimes W) \geq 8$, and thus $\rmR(W \otimes W) = 8$.

We stress that the key elements that are used in this construction are that $\rmR(W + \eps a_2 \otimes b_2 \otimes c_2) = 2$ for $\eps = \frac{1}{2}$ and $\eps =1$ and that $\rmR(a_2 \otimes b_2 \otimes c_2) = 1$. We will generalize this construction in the setting of border rank as follows: we will determine two tensors $T,Z$ such that $\uR( T - 2Z) = \uR(T - Z) = \uR(T) -1$ and $\uR(Z) = 1$. Then, the analog of expression \eqref{eqn: construction W drop} will show that $T$ verifies strict submultiplicativity of border rank.

 \subsection{Geometry} We denote with $\bbP^N$ the projective space of lines in $\bbC^{N+1}$. The term \emph{variety} always refers to a projective or affine algebraic variety; moreover, varieties are always considered to be nondegenerate, namely not contained in a hyperplane, unless stated otherwise. If $X \subseteq \bbP^N$ is a variety, we denote by $\hat{X}$ the affine cone over $X$ in $\bbC^{N+1}$ and by $\langle X \rangle$ the projective span of the variety $X$. A variety $X$ is called irreducible if it is not the union of two proper subvarieties.
 
 We refer to \cite{Harris:AlgGeo} for the notions of dimension (Lecture 11), degree (Lecture 18), tangent space (Lecture 14) and tangent cone (Lecture 20) of an algebraic variety. If $X \subseteq \bbP V$ is a variety, we denote by $I(X)$ the homogeneous ideal of $X$, which is a homogeneous ideal in the symmetric algebra $Sym(V^*)$. We denote by $I_d(X)$ the homogeneous component of degree $d$, that is a linear subspace of $S^d V^*$. A variety $X \subseteq \bbP^N$ of dimension $N-1$ is called hypersurface; in this case $I(X)$ is a principal ideal and the degree of $X$ is equal to the degree of a generator of $I(X)$.
 
 Let $X \subseteq \bbP^N$ and let $p \in \bbP^N$. The $X$-rank of $p$ is 
 \[
  \rmR_{X}(p) = \min \{ r: p \in \langle z_1 \vvirg z_r \rangle \text{ for some $z_1 \vvirg z_r \in X$}\}
 \]
and we write $\sigma_r^\circ(X) = \{ p \in \bbP^N : \rmR_{X}(p) \leq r\}$, the set of all points having $X$-rank at most $r$. The $r$-th secant variety of $X$ is $\sigma_r(X) = \bar{\sigma_r^\circ(X)}$, where the overline denotes the closure in the Zariski topology. The $X$-border rank of $p$ is
\[
 \uR_X(p) = \min \{ r: p \in \sigma_r(X) \}.
\]
It is a fact that secant varieties of irreducible varieties are irreducible (see, e.g., \cite{Harris:AlgGeo}, Lecture 8).

Now, fix vector spaces $V_1 \vvirg V_k$ and consider the Segre embedding  $Seg: \bbP V_1 \ttimes \bbP V_k \to \bbP (V_1 \ootimes V_k)$ defined by $Seg([v_1]\vvirg [v_k]) = [v_1 \ootimes v_k]$ where the brackets $[ \cdot ]$ denote the class of a vector in the corresponding projective space. Then $Seg(\bbP V_1 \ttimes \bbP V_k)$ is a variety in $\bbP (V_1 \ootimes V_k)$. A tensor $T$ has tensor rank $r$ if and only if the point $[T]$ has $(Seg(\bbP V_1 \ttimes \bbP V_k))$-rank $r$. The same is true for border rank because if $X$ is the Segre variety, then the closure of $\sigma_r^\circ(X)$ in the Zariski topology is the same as its closure in the Euclidean topology (this is a consequence of \cite{Mum:ComplProjVars}, Thm. 2.33).

\begin{remark}[\cite{Sko:personalComm}] \label{remark: drop with distinct points}
 We observe that when $X$ is a set of distinct points, an example of $X$-border rank submultiplicativity is immediate. Since in this case $X$-rank and $X$-border rank coincide (secant varieties are just arrangements of linear spaces), we only need to determine an example of $X$-rank submultiplicativity. Let $X \subseteq \bbP^1$ be a set of three general points; without loss of generality assume $X = \{ (1,0),(0,1),(1,1) \}$. We determine a point $p = q \otimes q \in Seg(\bbP^1 \times \bbP^1) \subseteq \bbP^3$ such that $\rmR_{X \times X} (p) = 3 < 4 = \rmR_X(q) ^2$ (and the same for border rank). Notice that for every $q \in \bbP^1$, $q \notin X$ we have $\uR_X (q) = \rmR_X (q) = 2$. Let $q = (1,-1)$ and $p = q \otimes q$. Then 
 \[
 p = q \otimes q = (1,1) \otimes (1,1) - 2 \cdot (1,0) \otimes (1,0) - 2 \cdot (0,1) \otimes (0,1),
 \]
proving $\rmR_{X \times X}(p) = \uR_{X \times X}(p) \leq 3$. Equality easily follows. 
\end{remark}

 \subsection{Varieties and groups}\label{subsection: gvarieties} Let $G$ be an algebraic group acting linearly on a vector space $V$, via a representation $G \to GL(V)$. Then the action passes to the projective space $\bbP V$, via the projection $GL(V) \to PGL(V)$; if $p \in \bbP V$, we denote by $G \cdot p$ the orbit of $p$ under the action of $G$. Let $X \subseteq \bbP V$ be an algebraic variety. We say that $X$ is a $G$-homogeneous space if $X = G\cdot p$ for some (and indeed for every) $p \in X$. We say that $X$ is quasi-$G$-homogeneous, or a $G$-orbit-closure, if $X = \bar{G \cdot p}$ for some (and indeed for a generic) $p \in X$, where the overline denotes the closure in the Zariski (or equivalently Euclidean) topology. We say that $X$ is a $G$-variety if it is closed under the action of $G$, namely $G\cdot p \subseteq X$ for every $p \in X$. It is immediate that if $X$ is a $G$-variety, then all its secant varieties are $G$-varieties.
 
The action of a group $G$ on $V$ defines via pullback an action on the symmetric algebra $Sym ( V^*)$: indeed, if $g \in G$ and $f \in Sym (V^*)$ is a polynomial on $V$, then $g \cdot f = f \circ g^{-1}$ defines a degree preserving linear action on $Sym (V^*)$. In particular, the homogeneous components $S^d V^*$ are $G$-representations. If $X$ is a $G$-variety, then $I_d(X)$ is a $G$-submodule of $S^d V^*$.

If $S \subseteq \bbP V$ is a hypersurface of degree $d$ that is also a $G$-variety, then $I_d(X)$ has to be a $1$-dimensional representation of $G$.

 The Segre variety defined in the previous section is an example of a homogeneous variety: $Seg( \bbP V_1 \ttimes \bbP V_k) \subseteq \bbP( V_1 \ootimes V_k)$ is the orbit of $[v_1 \ootimes v_k]$ under the action of $SL(V_1) \ttimes SL(V_k)$. Other examples of homogeneous varieties are Veronese varieties, Segre-Veronese varieties, Grassmannians, flag varieties and smooth quadrics. We refer to \cite[\S 6.9]{Lan:TensorBook} for additional information.

 \subsection{Flattenings}\label{subsec: flattenings} Let $V$ be a vector space. A \emph{flattening} of $V$ is a linear map $Flat_{E,F} : V \to \Hom (E,F)$ where $E,F$ are two vector spaces. Flattening maps are a classical approach to determining equations for secant varieties, although recently strong limitations on this
technique have been proved (see \cite{Galazka:VectorBundlesGiveEqnsForCactus} and \cite{EfrGarOliWig:BarriersRankMethods}). If $X \subseteq \bbP V$ is an algebraic variety, and $p \in \bbP V$, then the rank of $Flat_{E,F}(p)$ provides a lower bound on the $X$-border rank of $p$ as follows: let $r_0 = \max \{\rank ( Flat_{E,F}(z)) : z \in X \}$; then (see, e.g., Proposition 4.1.1 in \cite{LanOtt:EqnsSecantVarsVeroneseandOthers} and Lemma 18 in \cite{ChrJenZui:NonMultiplicativityTensorRank})
 \begin{equation}\label{eqn: flattening lower bound}
  \uR_X(p) \geq \frac{1}{r_0} \rank( Flat_{E,F}(p)).
 \end{equation}
In particular, minors of size $r \cdot r_0 +1$ of $Flat_{E,F}(p)$ give equations for $\sigma_r(X)$.

In the tensor setting, an element $T \in V_1 \ootimes V_k$ defines naturally a linear map for every set of indices $J \subseteq \{ 1 \vvirg k\}$, by $\bigotimes_{j \in J} V_j^* \to \bigotimes_{\ell \notin J} V_\ell$, via the natural contraction of $T$ on the factors of $J$. This defines a flattening map (which is indeed an isomorphism) $V_1 \ootimes V_k \to \Hom (\bigotimes_{j \in J} V_j^*, \bigotimes_{\ell \notin J} V_\ell)$, that is often called standard flattening. It is easy to show that $T$ is a rank $1$ tensor if and only if every standard flattening has rank $1$ on $T$. In particular, for this type of flattening, the value $r_0$ defined above is $1$.

Proposition 20 in \cite{ChrJenZui:NonMultiplicativityTensorRank} proves that flattening lower bounds are multiplicative in the following sense. For $i=1,2$, let $X_i \subseteq \bbP V_i$ be a variety and let $Flat_{E_i,F_i}$ be a flattening of $V_i$, with corresponding value $r_{i,0} =  \max \{\rank ( Flat_{E_i,F_i}(z_i)) : I \in X_i \}$. Let $p_i \in \bbP V_i$. Then
\begin{equation}\label{eqn: multiplicative flattening lower bounds}
  \uR_{X_1 \times X_2}(p_1 \otimes p_2) \geq \frac{1}{r_{0,1}} \rank( Flat_{E_1,F_1}(p_1)) \cdot \frac{1}{r_{0,2}} \rank( Flat_{E_2,F_2}(p_2)).
\end{equation}
We point out that in general the flattening lower bound in \eqref{eqn: flattening lower bound} is not an integer, and since $\uR_X(p) $ is an integer one obtains $R_X(p) \geq \lceil \frac{1}{r_0} \rank( Flat_{E,F}(p)) \rceil$; however, the multiplicativity result on the lower bounds only applies to the actual flattening bound, rather than to their ceilings. In particular, from \eqref{eqn: multiplicative flattening lower bounds}, one obtains $\uR_{X_1 \times X_2}(p_1 \otimes p_2) \geq \lceil \frac{1}{r_{0,1}} \rank( Flat_{E_1,F_1}(p_1)) \cdot \frac{1}{r_{0,2}} \rank( Flat_{E_2,F_2}(p_2)) \rceil$ and not $\uR_{X_1 \times X_2}(p_1 \otimes p_2) \geq \lceil \frac{1}{r_{0,1}} \rank( Flat_{E_1,F_1}(p_1)) \rceil \cdot \lceil \frac{1}{r_{0,2}} \rank( Flat_{E_2,F_2}(p_2)) \rceil$. This fact is one of the main insights that allowed us to find the example of strict submultiplicativity on tensor rank that we present in the next section.

\section{An example of strict submultiplicativity for border rank}\label{section: P2xP2xP2 basic}

In this section, we present an example of tensor $T$ with $\uR(T^{\otimes 2}) < \uR(T)^2$, giving an answer to the problem of strict submultiplicativity of border rank. The geometric description of this example is classical and it relies on the properties of the variety of tensors of border rank $4$ in $\bbC^3 \otimes \bbC^3 \otimes \bbC^3$. In Section \ref{section: two hypersurface examples}, we will give additional information on this variety, relating it with the general geometric framework that we present in Section \ref{section: secant multidrop}.

Let $A,B,C$ be three $3$-dimensional vector spaces and let $X = Seg(\bbP A \times \bbP B \times \bbP C)$ be the Segre variety in $\bbP(A\otimes B \otimes C)$ so that the affine cone $\hat{X}$ is the variety of rank $1$ tensors in $A \otimes B \otimes C$. Fix bases $a_1, a_2, a_3 \in A$, $b_1, b_2, b_3 \in B$ and $c_1, c_2, c_3 \in C$ with dual bases $\alpha_1, \alpha_2,\alpha_3 \in A^*$, $\beta_1, \beta_2,\beta_3 \in B^*$ and $\gamma_1, \gamma_2,\gamma_3 \in C^*$.

\begin{prop}\label{prop: explicit drop for C3 ot C3 ot C3}
Define 
\begin{align*}
T := &a_1 \otimes b_1 \otimes c_1 +  a_2 \otimes b_2 \otimes c_2 +a_3 \otimes b_3 \otimes c_3 +  \\ + &(\textsum_1^3 a_i)\otimes (\textsum_1^3 b_i) \otimes (\textsum_1^3 c_i) + 2 (a_1 + a_2)\otimes (b_1+b_3)\otimes (c_2+c_3).
\end{align*}
Then $\uR(T) = 5$ and $\uR(T^{\otimes 2} ) \leq 24 < 5^2$.
\end{prop}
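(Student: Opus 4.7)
The upper bound $\uR(T) \leq 5$ is immediate: the displayed formula writes $T$ as an explicit sum of five rank-one tensors, so $\rmR(T) \leq 5$ and hence $\uR(T) \leq 5$. For the matching lower bound one must certify that $T \notin \sigma_4(X) \subset \bbP(A \otimes B \otimes C) = \bbP^{26}$. As indicated in the introductory paragraph of Section~\ref{section: P2xP2xP2 basic}, the variety $\sigma_4(X)$ is a well-understood hypersurface cut out by a classical polynomial (Strassen's commutator equation, equivalently a Koszul-flattening determinant); substituting the explicit coefficients of $T$ and checking that this polynomial does not vanish on $T$ gives $\uR(T) \geq 5$. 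The evaluation is mechanical.

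Next, following the schema of Section~\ref{subsec: rank submult}, set
\[
 Z := (a_1 + a_2) \otimes (b_1 + b_3) \otimes (c_2 + c_3),
\]
which is a rank-one tensor, so $\uR(Z) = 1$. From the displayed formula for $T$ one reads off that
\[
 T - 2Z = a_1 \otimes b_1 \otimes c_1 + a_2 \otimes b_2 \otimes c_2 + a_3 \otimes b_3 \otimes c_3 + (\textsum_1^3 a_i) \otimes (\textsum_1^3 b_i) \otimes (\textsum_1^3 c_i)
\]
is a sum of four rank-one tensors, so $\uR(T - 2Z) \leq 4$. The crucial non-trivial step is to establish the \emph{key claim} $\uR(T - Z) \leq 4$, i.e.\ to show that the line $\{T - tZ : t \in \bbC\}$ meets $\sigma_4(X)$ not only at $t = 2$ but also at $t = 1$. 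Granting the key claim, a direct expansion verifies the identity
\[
 T \otimes T = (T - 2Z)^{\otimes 2} + (T - Z) \otimes (2Z) + (2Z) \otimes (T - Z),
\]
which is the border-rank analogue of~\eqref{eqn: construction W drop}. By subadditivity of border rank together with submultiplicativity under~$\otimes$ we then conclude
\[
 \uR(T^{\otimes 2}) \leq \uR(T - 2Z)^2 + 2\,\uR(T - Z)\,\uR(Z) \leq 4^2 + 2 \cdot 4 \cdot 1 = 24 < 25 = \uR(T)^2.
\]

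The main obstacle is therefore the key claim. Since $T - Z$ is visibly a sum of five rank-one tensors but four is not apparent, the task is to produce an explicit analytic one-parameter family $T_\eps$ with $\rmR(T_\eps) \leq 4$ and $T_\eps \to T - Z$ as $\eps \to 0$. The strategy I would pursue is to let two of the rank-one summands of $T - Z$ collide with an $O(1/\eps)$ coefficient, arranged so that the singular $O(1/\eps)$ contributions cancel and the $O(1)$ part of the limit reconstructs the missing summand. Equivalently (and more geometrically), one should identify $T - Z$ as a point on a tangent line to $\sigma_4^\circ(X)$ at some specific rank-four tensor, so that the line $\{T - tZ\}$ becomes tangent to $\sigma_4(X)$ at $t = 1$ in addition to crossing it transversally at $t = 2$. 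This is a concrete instance of the ``multi-drop on a line'' phenomenon that Section~\ref{section: secant multidrop} will axiomatize; here it must be verified by hand for the particular configuration, which I expect to be the only computationally delicate part of the argument. Once the degenerating family is produced, all remaining steps are purely formal.
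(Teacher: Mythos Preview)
Your overall architecture matches the paper exactly: the same $Z$, the same algebraic identity $T^{\otimes 2}=(T-2Z)^{\otimes 2}+(T-Z)\otimes 2Z+2Z\otimes(T-Z)$, and the same arithmetic $16+4+4=24$. The lower bound $\uR(T)\geq 5$ in the paper is obtained precisely via the Koszul flattening $T_B^{\wedge A}:A\otimes B^*\to\Lambda^2A\otimes C$ (what you call ``Strassen's commutator equation, equivalently a Koszul-flattening determinant''): one writes down the $9\times 9$ matrix and checks it is nonsingular, giving $\uR(T)\geq 9/2$.

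Where you diverge from the paper is in the handling of the key claim $\uR(T-Z)\leq 4$. You anticipate needing a limiting family $T_\eps\to T-Z$ with $\rmR(T_\eps)\leq 4$, and you speculate that the line $\{T-tZ\}$ is \emph{tangent} to $\sigma_4(X)$ at $t=1$. In fact the situation is simpler: the paper exhibits an explicit decomposition of $T-Z$ as a sum of four rank-one tensors, so $\rmR(T-Z)=4$ on the nose and no degeneration is required. Geometrically the line meets $\sigma_4(X)$ transversally at two distinct smooth points $t=1$ and $t=2$ (plus a high-multiplicity intersection at the rank-one point $t=\infty$), not tangentially. Your proposed limiting approach is not wrong---it would eventually succeed, since $T-Z\in\sigma_4(X)$---but it is harder to execute and the tangency picture you sketch is not what actually happens. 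The paper's explicit decomposition (found, as explained in a remark following the proof, by exploiting a cyclic $\bbZ/3$-symmetry of $T-Z$ after a change of basis) is both stronger and easier to verify.
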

\begin{proof}
The upper bound $\uR(T) \leq 5$ is clear from the expression of $T$. The lower bound is provided by the flattening map $ T_B^{\wedge A} : A \otimes B^* \to \Lambda^2 A \otimes C$ defined as the composition
\begin{equation}\label{eqn: strassen's flattening}
A \otimes B^* \xto {\id_A \otimes T_B} A \otimes A \otimes C  \xto{\pi_\Lambda \otimes \id_C} \Lambda^2 A \otimes C
\end{equation}
where $T_B : B^* \to A \otimes C$ is the standard tensor contraction and $\pi_\Lambda : A \otimes A \to \Lambda^2 A$ is the projection onto the skew-symmetric component.

Fixing bases $\{ a_i \otimes \beta_j \}$ in the domain and $\{a_i \wedge a_j \otimes c_k\}$ in the codomain (ordered lexicographically), one can see that the matrix associated to the linear map $T_B^{\wedge A}$ is
\[
T_B^{\wedge A} = \left( \begin{array}{ccccccccc}
1 & 1 & 1 & -2 & -1 & -1 & 0 & 0 & 0\\ 
3 & 2 & 3 & -3 & -1 & -3 & 0 & 0 & 0\\ 
3 & 1 & 3 & -3 & -1 & -3 & 0 & 0 & 0\\ 
1 & 1 & 1 & 0 & 0 & 0 & -2 & -1 & -1\\ 
1 & 1 & 1 & 0 & 0 & 0 & -3 & -1 & -3\\ 
1 & 1 & 2 & 0 & 0 & 0 & -3 & -1 & -3\\ 
0 & 0 & 0 & 1 & 1 & 1 & -1 & -1 & -1\\ 
0 & 0 & 0 & 1 & 1 & 1 & -3 & -2 & -3\\ 
0 & 0 & 0 & 1 & 1 & 2 & -3 & -1 & -3
 \end{array} \right)
\]
which is full rank. By \eqref{eqn: flattening lower bound}, we obtain that $\uR(T) \geq \frac{9}{2}$ and therefore $\uR(T) \geq 5$. So $\uR(T) = 5$.

Let $Z = (a_1 + a_2)\otimes (b_1+b_3)\otimes (c_2+c_3)$. We show that $\uR(T-2Z) = \uR(T-Z) = 4$. Since $\uR(Z) = 1$, we have $\uR(T - 2Z) , \uR(T - Z) \geq 4$. From the expression of $T$, it is clear that $\uR(T - 2Z) \leq 4$ so equality holds. Moreover, we observe
\begin{equation}\label{eqn: decomp of T-Z}
\begin{aligned}
  T-Z &= 2( a_1 + a_2 + \textfrac{1}{2} a_3) \otimes ( b_1 + \textfrac{1}{2} b_2 + b_3) \otimes (\textfrac{1}{2} c_1 + c_2 + c_3) + \\
     &+  (a_2 + \textfrac{1}{2} a_3) \otimes b_2 \otimes (\textfrac{1}{2}c_1 + c_2 ) + \\
     &+  (a_1 + \textfrac{1}{2} a_3) \otimes (b_1 + \textfrac{1}{2}b_2) \otimes c_1 + \\
     &+  a_3 \otimes (\textfrac{1}{2} b_2 + b_3 ) \otimes (\textfrac{1}{2}c_1 + c_3), \\
\end{aligned} 
\end{equation}

so $\uR(T - Z) = 4$.

By the argument of \eqref{eqn: construction W drop}, we obtain
\[
 T \otimes T = (T - 2Z)^{\otimes 2} + (T - Z) \otimes 2Z + 2Z \otimes (T - Z),
\]
providing $\uR(T \otimes T) \leq \uR(T - 2Z)^2 + 2 \uR(T-Z)  \leq 4^2 + 4 + 4 = 24 < 5^2 $.
\end{proof}

This proves Theorem \ref{thm: not multiplicative}. We observe that the multiplicativity of the flattening lower bound implies that $\uR( T\otimes T) \geq \left( \frac{9}{2} \right)^2 = 20.25$, providing that $\uR(T \otimes T) \geq 21$.

\begin{remark}
We explain how we determined the decomposition of $T-Z$ in the proof of Proposition \ref{prop: explicit drop for C3 ot C3 ot C3}. After a change of basis on $A,B$ and $C$, we can rewrite $T - Z$ as
\begin{align*}
T' &= a_1 \otimes b_2 \otimes c_3 +  a_3 \otimes b_1 \otimes c_2 + a_2 \otimes b_3 \otimes c_1 +  \\ + &(\textsum_1^3 a_i)\otimes (\textsum_1^3 b_i) \otimes (\textsum_1^3 c_i) + (a_1 + a_2)\otimes (b_1+b_2)\otimes (c_1+c_2).
\end{align*}
Identify $A,B,C$ via the isomorphism $a_i \leftrightarrow b_{i} \leftrightarrow c_{i}$ for $i=1,2,3$. This identification defines a natural action of the symmetric group $\frakS_3$ which permutes the three factors and $T'$ is invariant under the action of the $3$-cycle of $\frakS_3$. We numerically searched for a decomposition of $T'$ that was invariant (as a set) under the action of the subgroup of $\frakS_3$ generated by the $3$-cycle, namely a decomposition of the form
\[
 T' = v_1 \otimes v_2 \otimes v_3 + v_2 \otimes v_3 \otimes v_1 + v_3 \otimes v_1 \otimes v_2 + u \otimes u \otimes u.
\]
Searching for a decomposition of this form rather than a general one allows for a considerable reduction in the dimension of the search space. The set of decompositions of $T'$ having this form determines an affine subvariety in the $12$-dimensional space of $4$-tuples $(v_1,v_2,v_3,u)$ and a numerical algebraic geometry software such as Bertini (see \cite{BatHauSomWam:BertiniSoftware}) can easily determine the \emph{numerical irreducible decomposition} of this variety. It turns out that the variety of decompositions has $25$ irreducible components of dimension $3$ and degree $3$. The expression in \eqref{eqn: decomp of T-Z} is the result of a sampling procedure in which we searched for a decomposition easy to present and to verify by hand. This approach was motivated by Conjecture 4.1.4.2 of \cite{Landsberg:GeoComplTh}; even though this conjecture is false in general (as \cite{Shitov:CounterexampleComon} proved that Comon's conjecture is false), works such as \cite{ChiIkeLanOtt:GeometryRankDecompMatMult}, \cite{BalIkeLanRyd:GeomRankDecompMatMultII} and \cite{Conner:Rank18WaringDecomp} suggest that imposing symmetries is a computationally valid approach, at least in small dimension, to determine explicit decompositions for tensors of low rank.
\end{remark}

We point out that the tensors $T,T-Z$ and $T-2Z$ mentioned in Proposition \ref{prop: explicit drop for C3 ot C3 ot C3} have rank equal to border rank. In particular, the inequalities and the strict submultiplicativity result hold for rank as well.

Moreover, this example generalizes, providing an infinite family of tensors $T_m \in \bbC^3 \otimes \bbC^m \otimes \bbC^m$ with $\uR(T_m) = m+2$ and $\uR(T_m^{\otimes 2}) \leq (m+2)^2 -1$.

\begin{prop}
Let $m \geq 3$ and $A,B,C$ be vector spaces with bases $\{ a_i : i = 1 ,2,3 \}, \{ b_i : i = 1 \vvirg m \}, \{ c_i : i = 1 \vvirg m \}$ respectively. Let
\begin{align*}
T_m := &a_1 \otimes b_1 \otimes c_1 +  a_2 \otimes b_2 \otimes c_2 +a_3 \otimes b_3 \otimes c_3 +  \\ 
+ &(\textsum_1^3 a_i)\otimes (\textsum_1^3 b_i) \otimes (\textsum_1^3 c_i) + 2 (a_1 + a_2)\otimes (b_1+b_3)\otimes (c_2+c_3) + \\ 
+ &a_3 \otimes b_4 \otimes c_4 + \cdots + a_3 \otimes b_m \otimes c_m.
\end{align*}
Then $\uR(T_m) = m+2$ and $\uR(T_m^{\otimes 2}) \leq (m+2)^2 -1$.
\end{prop}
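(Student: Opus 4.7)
The structure of the proof should closely mirror that of Proposition \ref{prop: explicit drop for C3 ot C3 ot C3}, exploiting the fact that $T_m$ is $T$ plus $m-3$ rank-$1$ summands that use ``fresh'' basis vectors $b_i, c_i$ for $i \geq 4$ (all multiplied by the same vector $a_3$).

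\textbf{Upper bound $\uR(T_m) \leq m+2$.} This is immediate from subadditivity of border rank: $\uR(T_m) \leq \uR(T) + (m-3) = 5 + (m-3) = m+2$, using Proposition \ref{prop: explicit drop for C3 ot C3 ot C3}.

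\textbf{Lower bound $\uR(T_m) \geq m+2$.} The plan is to apply the same skew-symmetric flattening $T_B^{\wedge A}: A \otimes B^* \to \Lambda^2 A \otimes C$ from \eqref{eqn: strassen's flattening}, now viewed as a $3m \times 3m$ matrix. For a rank-$1$ tensor $a \otimes b \otimes c$, the image of this flattening has rank $2$ (the map $a' \mapsto a' \wedge a$ has rank $2$), so $r_0 = 2$. Thus by \eqref{eqn: flattening lower bound} it suffices to show that $\rank(T_m^{\wedge A}) \geq 2m+3$, which forces $\uR(T_m) \geq \lceil(2m+3)/2\rceil = m+2$. To compute the rank, I would observe a block decomposition of the matrix: the sub-block on columns indexed by $\{a_k \otimes \beta_j : k=1,2,3,\ j=1,2,3\}$ and rows indexed by $\{a_i \wedge a_j \otimes c_l : 1\leq i < j \leq 3,\ l=1,2,3\}$ is exactly the $9\times 9$ full-rank matrix computed in Proposition \ref{prop: explicit drop for C3 ot C3 ot C3}, contributing rank $9$. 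Each additional summand $a_3 \otimes b_i \otimes c_i$ (for $i \geq 4$) contributes only to the columns $a_k \otimes \beta_i$ and rows $a_k \wedge a_3 \otimes c_i$, with $a_3 \otimes \beta_i \mapsto 0$. Since the fresh index $i \geq 4$ appears neither in $T$'s column support nor in its row support, these contributions are $2\times 2$ full-rank blocks that are linearly independent from the original $9\times 9$ block and from each other. This gives $\rank = 9 + 2(m-3) = 2m+3$, as required.

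\textbf{Submultiplicativity bound.} Setting $Z := (a_1+a_2)\otimes(b_1+b_3)\otimes(c_2+c_3)$, the same argument as in Proposition \ref{prop: explicit drop for C3 ot C3 ot C3} shows that $\uR(T_m - 2Z) \leq m+1$ (from the explicit expression) and $\uR(T_m - Z) \leq m+1$ (by combining the decomposition \eqref{eqn: decomp of T-Z} of $T-Z$ with the $m-3$ additional rank-$1$ summands). Applying the identity from \eqref{eqn: construction W drop},
\begin{equation*}
T_m \otimes T_m = (T_m - 2Z)^{\otimes 2} + (T_m - Z) \otimes 2Z + 2Z \otimes (T_m - Z),
\end{equation*}
and using submultiplicativity and subadditivity of $\uR$ together with $\uR(Z) = 1$, we get
\begin{equation*}
\uR(T_m^{\otimes 2}) \leq (m+1)^2 + (m+1) + (m+1) = (m+1)(m+3) = (m+2)^2 - 1.
\end{equation*}

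\textbf{Main obstacle.} The only non-routine step is the rank computation of the flattening: one must be a bit careful to check that the ``fresh index'' structure really produces $2\times 2$ independent blocks (in particular that $a_3 \otimes \beta_i$ contributes zero, so the rank gain is exactly $2$ and not $3$, per added summand). Everything else reduces directly to Proposition \ref{prop: explicit drop for C3 ot C3 ot C3}.
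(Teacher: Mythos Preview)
The proposal is correct and follows essentially the same approach as the paper: both use the Koszul flattening $(T_m)_B^{\wedge A}$, isolate the $9\times 9$ block coming from $T_3$, and pick up the extra $2(m-3)$ in rank from the images $a_i\wedge a_3\otimes c_k$ for $i\in\{1,2\}$, $k\ge 4$, then invoke the same $Z$-based identity for the submultiplicativity bound. Your write-up is in fact slightly more explicit than the paper's (you spell out the upper bound $\uR(T_m)\le m+2$ and the reason each new summand contributes exactly $2$ to the rank), but the argument is the same.
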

\begin{proof}
We consider the flattening map $(T_m)^{\wedge A}_B : A \otimes B^* \to \Lambda^2 A \otimes C$ analogous to \eqref{eqn: strassen's flattening}. If $Z \in A \otimes B \otimes C$ has rank $1$, then $\rank(Z^{\wedge A} _ B) = 2$.

Let $B' = \langle b_1,b_2,b_3\rangle$ and $B'' = \langle b_4 \vvirg b_m \rangle$ so that $B = B' \oplus B''$ and similarly $C'$ and $C''$; then $(T_m)^{\wedge A}_{B} \vert_{A^* \otimes {B'}^\perp} = (T_3)^{\wedge A}_{B'}$ which has rank $9$ by Proposition \ref{prop: explicit drop for C3 ot C3 ot C3}. In particular $\Lambda^2 A \otimes C' \subseteq \Im ((T_m)^{\wedge A}_{B})$.

Observe that, if $i \in \{ 1, 2\}$ and $k \in \{ 4 \vvirg m\}$, then $(T_m)^{\wedge A} _ B (a_i \otimes b_k) = a_i \wedge a_3 \otimes c_k$. This shows that $\Im ((T_m)^{\wedge A}_{B}) \cap \Lambda^2 A \otimes C''$ contains a subspace of dimension at least $2(m-3)$.

We conclude that
\[
 \rank ((T_m)^{\wedge A}_{B} ) \geq 9 + 2(m-3) = 2m +3;
\]
from \eqref{eqn: flattening lower bound}, we obtain $\uR( T_m ) \geq \frac{2m + 3}{2}$ and therefore $\uR( T_m ) = m+2$.

By applying the same argument as in Proposition \ref{prop: explicit drop for C3 ot C3 ot C3}, we obtain $\uR(T_m^{\otimes 2}) \leq (m+2)^2 -1$.
\end{proof}

\section{The secant multidrop lemma}\label{section: secant multidrop}

In this section, we explain the geometric reason that causes the border rank strict submultiplicativity in Proposition \ref{prop: explicit drop for C3 ot C3 ot C3}. Indeed, it relies on a completely general construction that applies to rank and border rank with respect to any variety, for every tensor power and even in more general settings (see Remark \ref{rmk: general filtration}).

We assume that varieties are irreducible. Moreover, we assume that they are non-degenerate, namely they are not contained in a hyperplane so that their span is the entire ambient space. This is not restrictive, as one can always restrict the ambient spaces to be the span of the variety.

The key result is the following \emph{secant multidrop lemma}:

\begin{lemma}[Secant multidrop]\label{lemma: secant multidrop}
Let $X \subseteq \bbP^{N}$ be an algebraic variety. Fix an integer $r \geq 2$ and let $L$ be a line in $\bbP^{N}$ with the following properties:
 \begin{itemize}
  \dotitem there exists $z \in L \cap X$;
  \dotitem there exist distinct elements $q_0,q_1 \in L \cap \sigma_{r}(X)$ with $q_0,q_1$ distinct from $z$;
  \dotitem $L \not \subseteq \sigma_{r}(X)$.
 \end{itemize}
Then there exists $p$ such that $\uR_{X}(p) = r +1$ and for every $k \geq 2$
\[
\uR_{(X)^{\times k}} (p^{\otimes k}) \leq \frac{1}{2} \biggl((r+1)^k + 2r^k - (r-1)^k \biggr) < (r+1)^k .
\]

In particular $p$ verifies strict submultiplicativity of border rank.
\end{lemma}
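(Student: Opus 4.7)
The plan is to generalize the $k=2$ decomposition from Proposition \ref{prop: explicit drop for C3 ot C3 ot C3} to all tensor powers, driven by an explicit binomial identity. First, I would use the hypotheses to reduce to a convenient normalization: since $L \not\subseteq \sigma_r(X)$ and $L \cong \bbP^1$, the set $L \cap \sigma_r(X)$ is finite. Choosing projective coordinates on $L$ with $z = [1:0]$, $q_0 = [0:1]$ and $q_1 = [1:1]$, I would take $p = [-1:1]$ and, after rescaling affine lifts in $\hat{L}$, obtain the normalization $\hat{p} = \hat{q}_0 + \hat{z} = \hat{q}_1 + 2\hat{z}$ (reproducing the cross-ratio configuration of $T, T-Z, T-2Z$ from Proposition \ref{prop: explicit drop for C3 ot C3 ot C3}). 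Assuming this $p$ does not lie in $\sigma_r(X)$, the bounds $\uR_X(p) \leq \uR_X(\hat{q}_0) + \uR_X(\hat{z}) \leq r+1$ and $\uR_X(p) \geq r+1$ combine to give $\uR_X(p) = r+1$.

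The core step is to establish the tensor identity
\begin{equation*}
\hat{p}^{\otimes k} = \hat{q}_1^{\otimes k} + 2 \sum_{\substack{S \subseteq \{1 \vvirg k\} \\ |S| \text{ odd}}} \tau_S(\hat{q}_0, \hat{z}),
\end{equation*}
where $\tau_S(\hat{q}_0, \hat{z})$ denotes the pure tensor having $\hat{z}$ in each slot indexed by $S$ and $\hat{q}_0$ in the remaining slots. The verification amounts to expanding the right-hand side using $\hat{q}_0 = \hat{q}_1 + \hat{z}$ and comparing slot-by-slot with the expansion of $\hat{p}^{\otimes k} = (\hat{q}_1 + 2\hat{z})^{\otimes k}$; it reduces to the classical identity $\sum_{j \text{ odd}} \binom{\ell}{j} = 2^{\ell - 1}$ for $\ell \geq 1$.

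The rank estimate then follows routinely. Each $\tau_S(\hat{q}_0, \hat{z})$ with $|S| = j$ has $X^{\times k}$-border rank at most $r^{k-j}$, as a simple Segre tensor whose $k-j$ factors of type $\hat{q}_0$ have $X$-border rank at most $r$ and whose $j$ factors of type $\hat{z}$ have $X$-border rank $1$; combined with $\uR_{X^{\times k}}(\hat{q}_1^{\otimes k}) \leq r^k$ and summed over the $\binom{k}{j}$ subsets of each odd size, this gives $\uR_{X^{\times k}}(\hat{p}^{\otimes k}) \leq r^k + \sum_{j \text{ odd}} \binom{k}{j} r^{k-j} = \tfrac{1}{2}((r+1)^k + 2r^k - (r-1)^k)$ via $(r+1)^k - (r-1)^k = 2 \sum_{j \text{ odd}} \binom{k}{j} r^{k-j}$. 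Strictness against $(r+1)^k$ amounts to $2r^k < (r+1)^k + (r-1)^k$, which holds for $k \geq 2$ by strict convexity of $x \mapsto x^k$.

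The hardest step I expect is the first: ensuring that the specific $p = [-1:1]$ lies outside $\sigma_r(X)$. The symmetry $q_0 \leftrightarrow q_1$ provides a second candidate $p' = [2:1]$ satisfying the analogous normalization and admitting a symmetric decomposition, so one of $p, p'$ works unless $L \cap \sigma_r(X)$ contains both. In that special situation $|L \cap \sigma_r(X)| \geq 5$, and the additional low-border-rank points on $L$ would need to be exploited in variants of the decomposition above.
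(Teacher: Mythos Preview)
Your decomposition and border-rank estimate are exactly the paper's argument, modulo the harmless swap of the labels $q_0$ and $q_1$; the identity you state and your verification via $\sum_{j\ \mathrm{odd}}\binom{\ell}{j}=2^{\ell-1}$ are equivalent to the paper's expansion of \eqref{eqn: high drop formula} in the $(p,z)$-basis.

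The genuine gap is the one you flag yourself: with your fixed choice $p=[-1{:}1]$ (and the symmetric $p'=[2{:}1]$) there is no reason that either point avoids $\sigma_r(X)$, and ``$|L\cap\sigma_r(X)|\geq 5$'' does not by itself produce a working candidate. The paper closes this gap with a one-line trick that replaces your two candidates by an infinite family. Work in the affine chart of $L$ with $z$ at infinity and parametrize $\ell(\eps)=q_0+\eps z$, normalized so that $\ell(1)=q_1$. Set $q_j=\ell(j)=q_0+jz$. These are pairwise distinct points of $L$, so by finiteness of $L\cap\sigma_r(X)$ there is a \emph{minimal} $j_0$ with $q_{j_0}\notin\sigma_r(X)$; since $q_0,q_1\in\sigma_r(X)$ we have $j_0\geq 2$, and by minimality $q_{j_0-1},q_{j_0-2}\in\sigma_r(X)$. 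Now relabel $q_0:=q_{j_0-2}$, $q_1:=q_{j_0-1}$ and take $p:=q_{j_0}$, so that $p-z=q_1$ and $p-2z=q_0$ exactly as required. This sliding-window argument is what you were missing; once you have it, the rest of your write-up goes through verbatim.
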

\begin{figure}[!htb] 
 \includegraphics[scale=2.2]{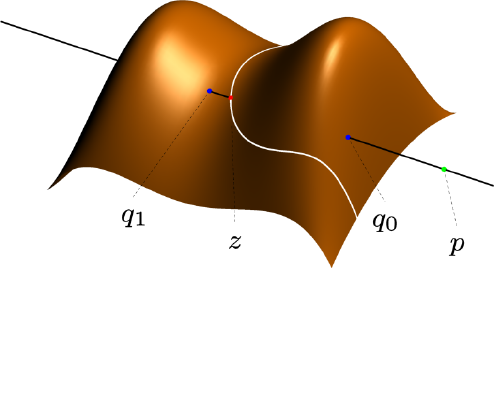}
 \vspace{-2cm}
 \caption{Schematic representation of Lemma \ref{lemma: secant multidrop}: The orange surface represents $\sigma_r(X)$, the white curve represents $X$ and the black line represents $L$.}
\end{figure}

\begin{proof}
Observe that, since $L \not \subseteq \sigma_{r}(X)$, then $L \cap \sigma_{r}(X)$ consists of finitely many points. Notice that $L$ is spanned by $z$ and $q_0$. Working on an affine chart contained in $\bbP^{N} \smallsetminus \{ z \}$, we fix a local parameter $\eps$ so that $L$ is parametrized by $\ell(\eps) = q_0 + \eps z$. Up to rescaling the local parameter assume $\ell(1) = q_1$.

Define the sequence $q_j = \ell(j) = q_0 + jz$. Since $L \cap \sigma_{r}(X)$ is finite, there exists $j_0$ such that $q_{j_0} \notin \sigma_{r}(X)$. Since $j_0 \geq 2$, up to redefining $q_{j_0-2}$ and $q_{j_0 -1}$ to be $q_0$ and $q_1$, we may assume $j_0 = 2$; let $p = q_{2}$. We have $p \in \sigma_{r+1}(X)$ and $q_1 = p - z, q_0 = p-2z \in \sigma_{r}(X)$. 

We verify
\begin{equation}\label{eqn: high drop formula}
p^{\otimes k} = q_0 ^{\otimes k} + 2 \sum_{\substack{ S \subseteq \{ 1 \vvirg k\} \\ \vert S \vert \ \text{odd}}} P_S 
\end{equation}
where $P_S$ is an element of $Seg(\bbP^N \ttimes \bbP^N)$ which is $z$ on the factors corresponding to indices in $S$ and $q_1$ on the factors corresponding to indices not in $S$. In order to prove \eqref{eqn: high drop formula}, we use $q_0 = p-2z$ and $q_1 = p-z$ and expand the right-hand side as a linear combination of tensor products whose factors are $p$ and $z$. These terms are linearly independent because $p$ and $z$ are linearly independent. We claim that all the coefficients on the right-hand side are $0$ except the one of $p^{\otimes k}$ which is $1$.

First, the coefficient of $p^{\otimes k}$ is $1$ because the coefficient of $p^{\otimes k}$ in $q_0^{\otimes k} = (p- 2z)^{\otimes k}$ is $1$ and  $p^{\otimes k}$ does not appear in the summation because $S \neq \emptyset$. Now, fix $m \geq 1$. Without loss of generality, we prove that the coefficient of $z^{\otimes m} \otimes p^{\otimes (k-m)}$ is $0$. The coefficient of this term in $(p-2z)^{\otimes k}$ is $(-1)^m2^m$. The coefficient of this term in $P_S$ is $0$ if $S \not\subseteq \{ 1 \vvirg m\}$ and it is $(-1)^{m - \vert S \vert}$ is $S \subseteq \{ 1\vvirg m\}$. Notice that $(-1)^{m - \vert S \vert} = (-1)^{m-1}$ because $\vert S \vert$ is odd. Since the number of subsets of $\{ 1 \vvirg m\}$ having odd cardinality is $2^{m-1}$, we conclude that the coefficient of $ z^{\otimes m} \otimes p^{\otimes (k-m)}$ on the right-hand side of \eqref{eqn: high drop formula} is $(-1)^m 2^m + 2 \cdot 2^{m-1} (-1)^{m-1} = 0$. This shows that \eqref{eqn: high drop formula} holds.

Observe that $\uR_{(X)^{\times k}}(P_S) \leq r^{k - \vert S \vert}$. Passing to the border rank in \eqref{eqn: high drop formula}, and using subadditivity and submultiplicativity of border rank, we obtain
\begin{align*}
 \uR_{(X)^{\times k}} (p^{\otimes k}) &\leq \uR_{(X)^{\times k}} ( q_0 ^{\otimes k} )  + \sum_{\substack{ S \subseteq \{ 1 \vvirg k\} \\ \vert S \vert  \text{ odd}}} \uR_{(X)^{\times k}}( P_S )  \\
 &\leq r^k + \sum_{\substack{ S \subseteq \{ 1 \vvirg k\} \\ \vert S \vert  \text{ odd}}} r^{k - \vert S \vert} = \\
 &= r^k + \sum_{\substack{j = 1 \vvirg k \\ j \text{ odd}}} \binom{k}{j} r^{k-j} = r^k + \frac{1}{2} ((r+1)^k - (r-1)^k ),
\end{align*}
and from this we conclude.
\end{proof}

\begin{remark}
 We observe that for $k=2$, \eqref{eqn: high drop formula} reduces to \eqref{eqn: construction W drop} with $p$ playing the role of $W$ and $z$ playing the role of $\frac{1}{2} a_1 \otimes b_1 \otimes c_1$. This particular case is the one that has been used in the proof of Proposition \ref{prop: explicit drop for C3 ot C3 ot C3}. Moreover, the different factors play essentially independent roles and the same proof applies to any $k$ varieties $X_i \subseteq \bbP^{N_i}$ (for $i=1 \vvirg k$) such that each of them satisfies the hypothesis of Lemma \ref{lemma: secant multidrop} for some line $L_i$ and points $z^{(i)},q_0^{(i)}$ and $q_1^{(i)}$ in $\bbP^{N_i}$.
\end{remark}

We say that a line $L$ that satisfies the hypothesis of Lemma \ref{lemma: secant multidrop} is a secant with a double drop for $\sigma_{r+1}(X)$. It is in general not clear for which varieties $X$ such a line exists. In the following, we restrict the analysis to the case where $X$ has a secant variety that is a hypersurface. In this case, classical facts about intersection multiplicity allow us to determine sufficient conditions for which a line with a double drop exists.

The multiplicity of a point $p$ in a variety $S$ (see, e.g., \cite{Harris:AlgGeo}, Lecture 20), denoted $\mult_S(p)$, is defined as the degree of the tangent cone to $S$ at $p$, namely $\deg TC_pS$. If $\mult_S(p) = 1$, then $S$ is smooth at $p$ and $TC_p(S) = T_p(S)$ is the tangent space of $S$ at $p$. If $\mult_S(p) = \deg(S)$, then $S$ is a cone over the point $p$ and indeed it coincides with $TC_p(S)$. If $S$ is a hypersurface defined by the polynomial $f$, and $\frakm_p$ is the maximal ideal cutting out the point $p$, then $\mult_p(S) = \max \{ k :  f \in \frakm_p^k \}$, where $\frakm_p^k$ denotes the $k$-th power of the ideal $\frakm_p$. More precisely we have the following:
\begin{remark}\label{rmk: multiplicity in hypersurfaces}
Let $S \subseteq  \bbP^N$ be a hypersurface of degree $d$ and let $f \in \bbC[\bfx]$ be its equation, that is a homogeneous polynomial of degree $d$. Let $p \in S$; up to a change of coordinates, suppose $p = (1,0\vvirg 0)$. Write $f = \sum_{0}^d f_{j} x_0^{d-j}$ where $f_{i}$ are homogeneous of degree $i$ in $x_1 \vvirg x_N$; since $p \in S$, we have $f(p) = f_0 = 0$. Let $m$ be the minimum integer such that $f_m \neq 0$. Thus, the tangent cone of $S$ at $p$ is $TC_p(S) = \{ x \in \bbP^N : f_m = 0\}$ and the multiplicity of $p$ in $S$ is $m$. Notice that in this case $\frakm_p = (x_1 \vvirg x_N)$ and we have $f \in \frakm_p^m$ and $f \notin \frakm_p^{m+1}$.
\end{remark}

We also recall the following basic facts, which are consequence of \cite{Mum:ComplProjVars}, Prop. 5.10:

\begin{remark}\label{rmk: lines and multiplicity}
Let $S$ be a hypersurface of degree $d$ and let $f \in \bbC[\bfx]$ be its equation. Let $L$ be a line in $\bbP^N$, $L \not \subseteq S$. Then $L$ intersects $S$ at $d$ points, counted with multiplicity (as zeros of a univariate polynomial). If $L$ is generic, then the $d$ points are distinct.

Now, fix $p \in S$ with $p=(1,0\vvirg 0)$ as before, and write $f = \sum_{0}^d f_{j} x_0^{d-j}$ as in Remark \ref{rmk: multiplicity in hypersurfaces}. Let $L$ be a line through $p$ in $\bbP^N$; parametrize $L$ locally by a parameter $\eps$ with $L = \{ (1, \lambda_1 \eps \vvirg \lambda _N \eps) : \eps \in \bbC \}$ where $\lambda_1 \vvirg \lambda_N$ are constants. Since $f_i$ is homogeneous of degree $i$ and $x_0 = 1$ on $L$, we have
\[
 f\vert_{L} = \textsum f_{j}( \lambda_1 \vvirg \lambda_N) \eps^j = \eps^m \left(\textsum f_{j}( \lambda_1 \vvirg \lambda_N) \eps^{j-m} \right)
\]
where $m = \mult_S(p)$. This shows that $f\vert_L$ has a zero of multiplicity at least $m$ at $\eps = 0$, corresponding to the point $p$. If $L$ is chosen generically among lines passing through $p$, then $\textsum f_{j}( \lambda_1 \vvirg \lambda_N) \eps^{j-m} $ is a generic univariate polynomial in $\eps$: in particular it has distinct zeros and it does not vanish at $\eps = 0$. We deduce that if $L$ is generic, then the multiplicity of the zero at $\eps = 0$ of the univariate polynomial $ f\vert_{L}$ is exactly $\mult_{S}(p)$ and that is the only zero of $f\vert_{L}$ which is not simple.
\end{remark}

From these remarks, we deduce the following result
\begin{prop}\label{prop: multisecant lemma for hypersurfaces}
 Let $X \subseteq \bbP^N$ be a variety such that $\sigma_r(X)$ is a hypersurface. Let $z \in X$. If $\mult_{\sigma_r(X)}(z) \leq \deg(\sigma_r(X)) -2$, then a generic line $L$ through $z$ is a line with a secant double drop for $\sigma_{r+1}(X)$.
\end{prop}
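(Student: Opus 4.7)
The plan is to verify directly the three hypotheses of Lemma \ref{lemma: secant multidrop} for $L$ varying in a nonempty Zariski open subset of the $\bbP^{N-1}$ of lines through $z$. Set $S := \sigma_r(X)$, let $d := \deg(\sigma_r(X))$ and let $f \in \bbC[\bfx]$ be a defining equation for $S$; write $m := \mult_S(z)$, so by hypothesis $m \leq d - 2$. Because $X \subseteq \sigma_r(X) = S$, the point $z$ indeed lies in $S$ and it makes sense to speak of its multiplicity. The first hypothesis of Lemma \ref{lemma: secant multidrop}, namely $z \in L \cap X$, is immediate since every $L$ under consideration passes through $z$ and $z \in X$.

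For the remaining two hypotheses I would invoke Remark \ref{rmk: lines and multiplicity} with $p = z$. That remark says that, outside a proper closed subset of the $\bbP^{N-1}$ of lines through $z$, the univariate polynomial $f|_L$ has degree $d$, vanishes to order exactly $m$ at the parameter value corresponding to $z$, and has its remaining $d - m$ zeros simple and pairwise distinct. Since $m \leq d - 2 < d$, the polynomial $f|_L$ does not vanish identically, so $L \not\subseteq S$, which gives the third hypothesis of Lemma \ref{lemma: secant multidrop}. The bound $d - m \geq 2$ then provides at least two distinct simple zeros of $f|_L$ away from $z$; choosing $q_0, q_1$ among them yields two distinct points of $L \cap \sigma_r(X)$ different from $z$, which is the second hypothesis.

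Concatenating the three verifications, a generic line through $z$ is indeed a secant with a double drop for $\sigma_{r+1}(X)$, in the terminology introduced after Lemma \ref{lemma: secant multidrop}. There is no genuine obstacle in this argument: the entire substance is the hypersurface intersection count encoded in Remark \ref{rmk: lines and multiplicity}, and the role of the numerical bound $\mult_{\sigma_r(X)}(z) \leq \deg(\sigma_r(X)) - 2$ is precisely to guarantee that, after accounting for the forced multiplicity $m$ at $z$, the generic line $L$ still meets $\sigma_r(X)$ in at least two further simple points, which is exactly what the multidrop construction in Lemma \ref{lemma: secant multidrop} requires.
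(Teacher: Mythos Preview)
Your proof is correct and follows essentially the same approach as the paper: both invoke Remark \ref{rmk: lines and multiplicity} to conclude that a generic line through $z$ meets $\sigma_r(X)$ with multiplicity exactly $m$ at $z$ and in $d-m \geq 2$ further distinct points, then feed those two points into Lemma \ref{lemma: secant multidrop}. Your write-up is slightly more explicit in checking the three hypotheses of the lemma separately, but the substance is identical.
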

\begin{proof}
 This follows immediately from Remark \ref{rmk: lines and multiplicity}. A generic line through $z$ intersects $\sigma_r(X)$ in $\deg(\sigma_r(X))$ points counted with multiplicity. By genericity, one zero of multiplicity exactly $\mult_{\sigma_r(X)}(z)$ is at $z$ and there are other $  \deg(\sigma_r(X)) - \mult_{\sigma_r(X)}(z)$ distinct points of intersection on $L$. Since $\mult_{\sigma_r(X)}(z) \leq \deg(\sigma_r(X)) -2$, there are at least two points of intersection between $\sigma_r(X)$ and $L$ other than $z$. Set $q_0$ and $q_1$ to be two of these points. Then they satisfies the hypotheses of Lemma \ref{lemma: secant multidrop}.
\end{proof}

We conclude this section observing the following.
\begin{remark}\label{rmk: general filtration}
We point out that the only properties of $\sigma_r(X)$ that have been used in the proof of Lemma \ref{lemma: secant multidrop} are subadditivity of border rank under sum of tensors and submultiplicativity under tensor product. In particular, the same argument applies to any sequence of varieties $Y_1\subseteq Y_2 \subseteq \cdots$ in $\bbP^{N}$ such that there are varieties $W_1 \subseteq W_2 \subseteq \cdots$ in $\bbP^{(N+1)^k -1}$ with the property that $J(W_{s_1}, W_{s_2}) \subseteq W_{s_1+s_2}$ (subadditivity -- here $J(-,-)$ is the join of varieties in the sense of \cite{Russo:TangentsSecants}), and $Seg(Y_{r_1} \ttimes Y_{r_k}) \subseteq W_{r_1 \cdots r_k}$ (submultiplicativity). Indeed, the analog of \eqref{eqn: high drop formula} is even more general and does not even require that the $Y_i$'s or the $W_j$'s are algebraic varieties.

In particular, Lemma \ref{lemma: secant multidrop} applies to \emph{cactus varieties} (see, e.g., \cite{BuczBucz:SecantVarsHighDegVeroneseReembeddingsCataMatAndGorSchemes}). We observe that indeed, since every zero-dimensional scheme of length $4$ in $\bbP^2 \times \bbP^2 \times \bbP^2$ is smoothable, one obtains that the $4$-th cactus variety of $\bbP^2 \times \bbP^2 \times \bbP^2$ coincides with the $4$-th secant variety; in particular this implies that the cactus border rank of the tensor $T$ in Proposition \ref{prop: explicit drop for C3 ot C3 ot C3} is $5$. Thus, Proposition \ref{prop: explicit drop for C3 ot C3 ot C3} shows that the cactus border rank of $T^{\otimes 2}$ is at most $24$ providing an example of strict submultiplicativity of cactus rank and cactus border rank.
\end{remark}

\section{Some examples of border rank submultiplicativity via lines with drops} \label{section: two hypersurface examples}

In this section, we present three examples where we can apply Lemma \ref{lemma: secant multidrop} and Proposition \ref{prop: multisecant lemma for hypersurfaces}.  The first one is an extensive presentation of the example of Proposition \ref{prop: explicit drop for C3 ot C3 ot C3}. The second and third examples are in the setting of algebraic curves. We show how the phenomenon of the drop along a line occurs for the second secant variety of a normal curve of genus $1$ in $\bbP^4$ and in $\bbP^6$ and for the second secant variety of a normal curve of genus $2$ in $\bbP^4$.

\subsection{The Segre embedding of $\bbP^2 \times \bbP^2 \times \bbP^2$} As in Section \ref{section: P2xP2xP2 basic}, let $A,B,C$ be $3$-dimensional vector spaces and let $X = \bbP A \times \bbP B \times \bbP C$ be the variety of (projective classes of) rank $1$ tensors in $\bbP (A \otimes  B \otimes C)$.

We present a proof of the classical fact that $\sigma_4(X)$ is a hypersurface of degree $9$ (see, e.g., \cite{LanMan:IdealSecantVarsSegre}, Prop. 6.1). Let $\hat{X} \subseteq A \otimes B \otimes C$ be the affine cone over $X$. First, we observe $\codim ( \sigma_4(\hat{X})) \leq 1$.

For every algebraic variety $X \subseteq \bbP^N$, by Terracini's Lemma (see, e.g., \cite[\S5.3]{Lan:TensorBook}), we have
\[
\dim \sigma_r(\hat{X}) = \dim \left( T_{z_1} \hat{X} + \cdots + T_{z_r} \hat{X} \right)
\]
where $z_1 \vvirg z_r$ are generic points of $\hat{X}$ and $T_{z_i} \hat{X}$ denotes the (affine) tangent space to $\hat{X}$ at $z_i$. In our setting, if $a \in A,b\in B, c\in C$, we have $T_{a\otimes b \otimes c} \hat{X} = A \otimes b \otimes c + a \otimes B \otimes c + a \otimes b \otimes C$ where $A \otimes b \otimes c$ is the linear subspace in $A \otimes B \otimes C$ consisting of the elements of the form $a' \otimes b \otimes c$ with $a' \in A$ (and similarly for the other two summands).

Consider the four points of $\hat{X}$ defined by $z_i = a_i \otimes b_i \otimes c_i$ for $i=1,2,3$ and let $z_4 = (\textsum_1^3 a_i)\otimes (\textsum_1^3 b_i) \otimes (\textsum_1^3 c_i)$. One can show via a simple calculation of the rank of a matrix (which in the most naive approach has size $36 \times 27$), that $\dim \bigl( T_{z_1} \hat{X} + \cdots + T_{z_4} \hat{X} \bigr) = 26$ obtaining that $\dim \sigma_4(\hat{X}) \geq 26$. 

On the other hand, it is clear that $\sigma_4(X)$ is a proper subvariety of $\bbP (A\otimes B \otimes C)$, as the tensor $T$ defined in Proposition \ref{prop: explicit drop for C3 ot C3 ot C3} verifies $\uR(T) = 5$. A geometric argument is given in Lemma 3.6 of \cite{AboOttPet:InductionSecantSegre}. We show that the flattening method used in Proposition \ref{prop: explicit drop for C3 ot C3 ot C3} gives indeed the equation of $\sigma_4(X)$. Define 
\[
 Flat_{A \otimes B^*, \Lambda^2 A \otimes C } : A \otimes B \otimes C \to \Hom (A \otimes B^* , \Lambda^2 A \otimes C)
\]
given by $T \mapsto T_B^{\wedge A}$, defined as in Proposition \ref{prop: explicit drop for C3 ot C3 ot C3}. The function $A \otimes B \otimes C \to \bbC$ defined by $\det ( T_B^{\wedge A})$ is a homogeneous polynomial $\scrS \in S^9 ( A^* \otimes B^* \otimes C^*)$ and by \eqref{eqn: flattening lower bound} and the discussion in the proof of Proposition \ref{prop: explicit drop for C3 ot C3 ot C3}, we have that $\sigma_4( X) \subseteq \{ \scrS = 0\}$. To prove that equality holds, it suffices to show that $\scrS$ is irreducible. Let $G = SL(A) \times SL(B) \times SL(C)$; as mentioned in Section \ref{subsection: gvarieties}, $X$ is a $G$-variety, therefore the equation of $\sigma_4(X)$ is a $G$-invariant in $S^d (A^* \otimes B^* \otimes C^*)$ with $d = \deg(\sigma_4(X))$; since $\dim A = \dim B = \dim C = 3$, $G$-invariants in $S^d (A^* \otimes B^* \otimes C^*)$ can only occur in degree $d = 3\delta$ for some $\delta$. It is immediate that indeed $\scrS$ is a $G$-invariant. If it was reducible, by unique factorization, it would have a nontrivial factor generating a $1$-dimensional representation of $G$, that is an invariant as well; in particular, if $\scrS$ was reducible, then $\scrS$ would have a $G$-invariant factor $f \in S^3 (A^* \otimes B^* \otimes C^*)$; the dimension of the space of $G$-invariants in $S^3(A^* \otimes B^* \otimes C^*)$ is the Kronecker coefficient $k_{(1^3),(1^3),(1^3)}$, which is $0$ because $k_{(1^3),(1^3),(1^3)} = k_{(3),(3),(1^3)} = \dim \Hom_{\frakS_3}([3],[3]\otimes [1^3]) = \dim \Hom_{\frakS_3}([3],[1^3]) = 0$ by Schur's Lemma (see, e.g., \cite{FulHar:RepTh}, Lec. 1, Lemma 1.7).

Explicitly, if $T = \sum_1^3 t_{ijk} a_i \otimes b_j \otimes c_k$, the matrix of $T_B^{\wedge A}$ in the bases $\{a_i \otimes \beta_j : i,j = 1 \vvirg 3\}$ of $A \otimes B^*$ and $\{ a_i \wedge a_j \otimes c_k: i,j,k = 1 \vvirg 3 \}$ of $\Lambda^2 A \otimes C$ is 

\begin{equation} \label{eqn: Strassen flattening matrix}
 T_B^{\wedge A} = 
\left( \begin{array}{ccccccccc}
t_{211} & t_{221} & t_{231} & -t_{111} & -t_{121} & -t_{131} & 0 & 0 & 0\\ 
t_{212} & t_{222} & t_{232} & -t_{112} & -t_{122} & -t_{132} & 0 & 0 & 0\\ 
t_{213} & t_{223} & t_{233} & -t_{113} & -t_{123} & -t_{133} & 0 & 0 & 0\\ 
t_{311} & t_{321} & t_{331} & 0 & 0 & 0 & -t_{111} & -t_{121} & -t_{131}\\ 
t_{312} & t_{322} & t_{332} & 0 & 0 & 0 & -t_{112} & -t_{122} & -t_{132}\\ 
t_{313} & t_{323} & t_{333} & 0 & 0 & 0 & -t_{113} & -t_{123} & -t_{133}\\ 
0 & 0 & 0 & t_{311} & t_{321} & t_{331} & -t_{211} & -t_{221} & -t_{231}\\ 
0 & 0 & 0 & t_{312} & t_{322} & t_{332} & -t_{212} & -t_{222} & -t_{232}\\ 
0 & 0 & 0 & t_{313} & t_{323} & t_{333} & -t_{213} & -t_{223} & -t_{233}
\end{array}\right).
\end{equation}

\begin{lemma}
 Let $z \in \bbP A \times \bbP B \times \bbP C$. Then $\mult_{\sigma_4(X)}(z) \leq 7$.
\end{lemma}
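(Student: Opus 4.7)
The approach is to invoke Remark \ref{rmk: lines and multiplicity}: $\mult_{\sigma_4(X)}(z)$ equals the minimum over all lines $L$ through $z$ of the order of vanishing at $z$ of the defining equation $\scrS = \det(T_B^{\wedge A})$ restricted to $L$. So it suffices to exhibit a single line $L$ through $z$ on which $\scrS|_L$ vanishes to order at most $7$ at $z$. Since $G = SL(A) \times SL(B) \times SL(C)$ acts transitively on $X$ and leaves $\scrS$ invariant (up to a scalar), the multiplicity is the same at every point of $X$, so one may take $z = a_1 \otimes b_1 \otimes c_1$.

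Parametrize a line $L$ through $z$ by $T(\eps) = z + \eps w$ for some direction $w \in A \otimes B \otimes C$, and consider $\scrS(T(\eps)) = \det(T(\eps)_B^{\wedge A})$ as a polynomial in $\eps$. From \eqref{eqn: Strassen flattening matrix} one sees that $z_B^{\wedge A}$ has rank $2$: its only nonzero entries are $-1$ in positions $(1,4)$ and $(4,7)$. Splitting every entry of $T(\eps)_B^{\wedge A}$ into its $\eps$-constant and $\eps$-linear parts and applying the Leibniz formula, each summand of the determinant must absorb at least $7$ factors of $\eps$, because only the two special positions contribute a nonzero constant; hence the expansion starts at order $\eps^7$. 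Moreover the coefficient of $\eps^7$ collects exactly those permutations $\sigma \in \frakS_9$ with $\sigma(1)=4$ and $\sigma(4)=7$, and a direct identification of these terms shows it equals, up to sign, $\det D(w)$, where $D(w)$ is the $7\times 7$ submatrix of $w_B^{\wedge A}$ obtained from \eqref{eqn: Strassen flattening matrix} by deleting rows $1,4$ and columns $4,7$ (and replacing each $t_{ijk}$ by $w_{ijk}$).

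It therefore suffices to verify that the polynomial $\det D(w)$ is not identically zero in the coefficients $w_{ijk}$. This is checked by evaluating at a convenient sparse choice: for instance $w_{122} = w_{133} = -1$ and $w_{212}=w_{223}=w_{332}=w_{333}=w_{321}=w_{221}=w_{231}=1$, with all other $w_{ijk}=0$. The resulting $D(w)$ has only one nonzero entry in each of the first two columns, so successive cofactor expansions reduce the computation to a small determinant and yield $\det D(w) = 1 \neq 0$. Consequently $\scrS|_L(\eps) = \pm \eps^{7} + O(\eps^{8})$ for this line, which gives $\mult_{\sigma_4(X)}(z) \leq 7$.

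The main difficulty is organizational rather than conceptual: one must track the row-column bookkeeping to identify precisely which $7\times 7$ submatrix controls the coefficient of $\eps^7$, and then choose $w$ sparse enough that the determinant calculation is feasible by hand. Both steps are elementary once the rank-$2$ structure of $z_B^{\wedge A}$ has been exploited to localize the leading term of the Taylor expansion of $\scrS$ along $L$.
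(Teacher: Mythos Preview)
Your proof is correct. Both your argument and the paper's rest on the same idea---restrict the degree-$9$ equation $\scrS$ to a line through $z$ and check that the order of vanishing is at most $7$---but the executions differ. The paper simply picks $z=a_1\otimes b_3\otimes c_2$, takes the line in the direction of the tensor $T$ from Proposition~\ref{prop: explicit drop for C3 ot C3 ot C3}, and computes the full $9\times 9$ determinant as a cubic in $\eps$, finding $-\eps^7(4\eps^2+9\eps+4)$. Your approach is more structural: you observe that $z_B^{\wedge A}$ has rank $2$, which forces every term in the Leibniz expansion of $\det(z+\eps w)_B^{\wedge A}$ to carry at least $\eps^7$, and then identify the $\eps^7$-coefficient as a specific $7\times 7$ minor of $w_B^{\wedge A}$ whose nonvanishing you verify by a sparse evaluation. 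This buys you an explanation of \emph{why} the multiplicity is $7$ (it equals $9-\rank z_B^{\wedge A}$), and in fact your argument simultaneously proves the reverse inequality $\mult_{\sigma_4(X)}(z)\geq 7$, since the rank-$2$ observation shows every line through $z$ has vanishing of order at least $7$. The paper's approach, by contrast, reuses an earlier computation and is a pure verification. Both are short and valid; yours gives a cleaner conceptual picture.
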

\begin{proof}
Let $z = a_1 \otimes b_3 \otimes c_2$ and consider $T_\eps = z + \eps T$ where $T$ is the tensor of Proposition \ref{prop: explicit drop for C3 ot C3 ot C3}. Then the matrix of \eqref{eqn: Strassen flattening matrix} at $T_\eps$ is 
 \[
(T_\eps)_B^{\wedge A} = \left( \begin{array}{ccccccccc}
\eps & \eps & \eps & -2\eps & -\eps & -\eps & 0 & 0 & 0\\ 
3\eps & 2\eps & 3\eps & -3\eps & -\eps & -3\eps-1 & 0 & 0 & 0\\ 
3\eps & \eps & 3\eps & -3\eps & -\eps & -3\eps & 0 & 0 & 0\\ 
\eps & \eps & \eps & 0 & 0 & 0 & -2\eps & -\eps & -\eps\\ 
\eps & \eps & \eps & 0 & 0 & 0 & -3\eps & -\eps & -3\eps-1\\ 
\eps & \eps & 2\eps & 0 & 0 & 0 & -3\eps & -\eps & -3\eps\\ 
0 & 0 & 0 & \eps & \eps & \eps & -\eps & -\eps & -\eps\\ 
0 & 0 & 0 & \eps & \eps & \eps & -3\eps & -2\eps & -3\eps\\ 
0 & 0 & 0 & \eps & \eps & 2\eps & -3\eps & -\eps & -3\eps
\end{array}\right).
 \]
We have
\[
 \det(T_\eps) = -4\eps^9 - 9\eps^8 - 4\eps^7 = -\eps^7 (4\eps^2+9\eps+4) 
\]
which has a zero of multiplicity $7$ at $0$. This shows $\mult_{\sigma_4(X)}(z)\leq 7$. By the action of $G$, $z$ is equivalent to any point of $\bbP A \times \bbP B \times \bbP C$ and this concludes the proof.
\end{proof}

By Proposition \ref{prop: multisecant lemma for hypersurfaces}, we have that a generic line a rank $1$ tensor satisfies the hypotheses of Lemma \ref{lemma: secant multidrop}. The construction of $T$ in Section \ref{section: P2xP2xP2 basic} reflects exactly the general construction of Lemma \ref{lemma: secant multidrop}, by taking $p =T$, $z= Z$, $q_0 = T - 2Z$ and $q_1= T - Z$.

\subsection{The elliptic normal quintic}

In the setting of algebraic curves, the following remark will be useful
\begin{remark}\label{rmk: degree sigma2 of a curve}
 Let $C \subseteq \bbP^N$ be a smooth curve of degree $d$ and genus $g$. Then
 \[
  \deg( \sigma_2(C)) = \frac{(d-1)(d-2)}{2} - g.
 \]
The proof of this fact is classical. Applying a generic projection on $\pi: \bbP^N \dashto \bbP^2$, one reduces to compute the number of nodes of the plane curve $\pi(C)$, of degree $d$ and genus $g$. This number is indeed $\frac{(d-1)(d-2)}{2} - g$. 
 \end{remark}
 
Let $X$ be an elliptic normal curve in $\bbP^4$, which is a curve of degree $5$ and genus $1$. The ideal of $X$ is generated by $5$ quadrics; using coordinates $x_0 \vvirg x_4$ on $\bbP^4$, we have an explicit example given by the five quadrics $q_i = x_i^2 - x_{i+1}x_{i-1} + x_{i+2}x_{i-2}$, where $i = 0 \vvirg 4$ and the indices are to be read $\mathrm{mod} \ 5$ (see, e.g., \cite{Hulek:ProjGeomEllipticCurves}, Ch. IV).

Palatini's Lemma (see, e.g., \cite[Proposition 1.1.2]{Russo:TangentsSecants}) guarantees that if $C \subseteq \bbP^N$ is a curve, then $\dim (\sigma_r(C)) = \min \{ 2r-1, N\}$. In particular $\dim (\sigma_2(X)) = 3$, so $\sigma_2(X)$ is a hypersurface in $\bbP^4$. 

The equation of $\sigma_2(X)$ is the determinant of the Jacobian matrix of the quadrics $q_0 \vvirg q_4$, namely $f = \det(J(\bfx))$ where 
\[
 J = \left[ \begin{array}{ccccc} 
      2 {x}_{0}& {-{x}_{2}}& {x}_{4}& {x}_{1}& {-{x}_{3}}\\
      {-{x}_{4}}& 2 {x}_{1}& {-{x}_{3}}& {x}_{0}& {x}_{2}\\
      {x}_{3}& {-{x}_{0}}& 2 {x}_{2}& {-{x}_{4}}& {x}_{1}\\
      {x}_{2}&{x}_{4}& {-{x}_{1}}& 2 {x}_{3}& {-{x}_{0}}\\
      {-{x}_{1}}& {x}_{3}& {x}_{0}& {-{x}_{2}}& 2 {x}_{4}\\
       \end{array}
 \right].
\]

In particular $\deg (\sigma_2(X)) = 5$ (in accordance with Remark \ref{rmk: degree sigma2 of a curve}) and it turns out that $\mult_{\sigma_2(X)}(z) = 3$ for every $z \in X$ (see, e.g., \cite{Fisher:GenusOnePfaffians}, Lemma 6.7).

By Proposition \ref{prop: multisecant lemma for hypersurfaces}, a generic line through $z$ satisfies the hypotheses of Lemma \ref{lemma: secant multidrop}. 

We construct an explicit example
\begin{example}
We work in the affine space $\bbC^5$. Consider the four points in $\bbC^5$:
 \begin{align*}
  z_0 &= (0,1,-1,1,-1),\\
  z_1 &= (-1,0,1,-1,1),\\
  z_2 &= (1,-1,0,1,-1),\\
  z_3 &= (-1,1,-1,0,1),\\
 \end{align*}
 obtained by cyclic permutation of the coordinates (the fifth point that would arise is not necessary in the construction). We have $z_i \in \hat{X}$ for $i=0\vvirg 3$; so $z_i + z_j \in \sigma_2(\hat{X})$ for $i,j = 0 \vvirg 3$. Let $p = z_1 + z_2 + 2 z_3$. We have $\rank (J(p)) = 5$, so $p \notin \sigma_2(\hat{X})$, and we obtain $\uR_{X} (p) = 3$. However, it is clear that $q_0 = p - 2z_3 \in \sigma_2(\hat{X})$ and one can verify that $q_1 = p - z_3 \in \sigma_2(\hat{X})$, because $J(q_1)$ is singular. We conclude that the line through $p$ and $z$ is a line with a double drop for $X$ and $p$ satisfies strict submultiplicativity of border rank for $r = 2$.
\end{example}

We obtain the following result.
\begin{prop}
 Let $X$ be an elliptic normal quintic in $\bbP^4$. Then the $X$-rank and the $X$-border rank are not multiplicative under tensor product.
\end{prop}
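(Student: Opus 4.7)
The plan is to combine the explicit construction in the example immediately preceding the proposition with Lemma \ref{lemma: secant multidrop} applied at $r=2$ and $k=2$. The example produces four points $z_0,z_1,z_2,z_3 \in \hat{X}$ and sets $p = z_1+z_2+2z_3$, verifying that $[p]\notin \sigma_2(X)$ (so $\uR_X([p]) = 3$) while $[p-2z_3] = [z_1+z_2]$ and $[p-z_3] = [z_1+z_2+z_3]$ both lie in $\sigma_2(X)$ (the former obviously, the latter because $J(q_1)$ is singular). Taking $z = [z_3] \in X$, the line $L = \langle [p],z\rangle$ meets $X$ at $z$ and contains the two distinct points $q_0 = [p - 2z_3]$ and $q_1 = [p - z_3]$ of $\sigma_2(X)$, and is not contained in $\sigma_2(X)$ since $[p]\in L$ is not. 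Thus all three hypotheses of Lemma \ref{lemma: secant multidrop} are verified.

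Applying the conclusion of the lemma with $r=2,\, k=2$ yields
\[
 \uR_{X\times X}([p]^{\otimes 2}) \;\leq\; \tfrac{1}{2}\bigl(3^2 + 2\cdot 2^2 - 1^2\bigr) \;=\; 8 \;<\; 9 \;=\; \uR_X([p])^2,
\]
which is the desired strict submultiplicativity for $X$-border rank.

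For the statement about $X$-rank, I would reuse the expansion \eqref{eqn: high drop formula} at $k=2$, namely $p^{\otimes 2} = q_0^{\otimes 2} + 2\bigl(z\otimes q_1 + q_1\otimes z\bigr)$, now bounding the $X$-rank of each summand rather than its border rank. Since $\rmR_X(z)=1$ and $\rmR_X([q_0]) \leq 2$ by the explicit writing $q_0 = z_1+z_2$, and since the prefactor $2$ is a scalar that does not affect rank, we obtain $\rmR_{X\times X}([p]^{\otimes 2}) \leq 4 + 2\cdot\rmR_X([q_1]) + 2\cdot\rmR_X([q_1])$, provided $\rmR_X([q_1])$ is finite. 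Combined with $\rmR_X([p])\geq \uR_X([p]) = 3$ and the obvious upper bound $\rmR_X([p])\leq 3$ from its defining expression, proving $\rmR_X([q_1])=2$ would give $\rmR_{X\times X}([p]^{\otimes 2}) \leq 4+2+2 = 8 < 9 = \rmR_X([p])^2$.

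The main obstacle is precisely this last verification: showing that $[q_1]$ has $X$-rank equal to $2$, not merely border rank $2$. The singularity of $J(q_1)$ places $[q_1]$ in $\sigma_2(X)$ but a priori allows it to sit on the tangent variety $\tau(X)$; however, $\tau(X)$ is $2$-dimensional inside the $3$-dimensional hypersurface $\sigma_2(X)$, so its complement in $\sigma_2(X)$ consists entirely of honest chords, i.e. points of $X$-rank exactly $2$. I would settle the issue either by a direct genericity check that $[q_1]\notin \tau(X)$ (for instance by verifying that no tangent line to $X$ passes through $[q_1]$ using the explicit parametrization of $X$ coming from the quadrics $q_i$), or more concretely by producing points $w_1,w_2 \in \hat X$ with $q_1 = w_1 + w_2$, which can be obtained by intersecting $X$ with a suitably chosen secant chord through $[q_1]$ using the Pfaffian/group-law description of $X$ recalled from \cite{Fisher:GenusOnePfaffians}.
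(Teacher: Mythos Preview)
Your border-rank argument is correct and is precisely what the paper does: the Example supplies $z=[z_3]$, $q_0=[p-2z_3]$, $q_1=[p-z_3]$ and $p$ satisfying the hypotheses of Lemma~\ref{lemma: secant multidrop} with $r=2$, and the lemma at $k=2$ gives $\uR_{X\times X}(p^{\otimes 2})\leq 8<9$.

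For the $X$-rank part you have correctly located a genuine gap, and the paper's route is different from yours. Your expansion $p^{\otimes 2}=q_0^{\otimes 2}+2(z\otimes q_1+q_1\otimes z)$ only yields $\rmR_{X\times X}(p^{\otimes 2})\leq 8$ once $\rmR_X(q_1)=2$ is established, and the Example only places $q_1$ in $\sigma_2(X)$ via the Jacobian rank; it does not exclude $q_1\in\tau(X)$. Your proposed remedies (ruling out a tangent line through $q_1$, or exhibiting an explicit chord) are plausible but you do not carry them out, so the argument is incomplete as written. The paper bypasses this verification entirely by deducing rank non-multiplicativity from border-rank non-multiplicativity via the tensor asymptotic rank: from $\uR_{X\times X}(p^{\otimes 2})\leq 8$ together with Proposition~\ref{prop: asymptotic rank less than border rank} and Corollary~\ref{corol: rank vs border rank asymptotically} one obtains $\aR_X(p)\leq\sqrt{8}<3=\uR_X(p)\leq\rmR_X(p)$, and since $\aR_X(p)=\lim_k\rmR_{X^{\times k}}(p^{\otimes k})^{1/k}$ by definition, this forces $\rmR_{X^{\times k}}(p^{\otimes k})<\rmR_X(p)^k$ for some $k$. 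The paper makes this argument explicit for the genus-$2$ curve at the end of the same section, and it applies verbatim to the elliptic quintic. The trade-off is that the asymptotic-rank method does not guarantee the rank drop already at $k=2$, whereas your direct approach (if the verification of $\rmR_X(q_1)=2$ were completed) would.
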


More generally, let $X_m$ be an elliptic normal curve in $\bbP^{2m}$. Then $\sigma_m(X_m)$ is a hypersurface; Conjecture 6.8 in \cite{Fisher:GenusOnePfaffians} states that $\deg(\sigma_m(X_m)) = 2m+1$ and that the equation is a polynomial $F_m$ such that $F_m^m$ is the determinant of the Jacobian matrix of the $m$ generators of $I(\sigma_{m-1}(X_m))$. The conjecture is verified in \cite{Fisher:GenusOnePfaffians} for $m = 3$, where $\sigma_3(X_3)$ is a hypersurface of degree $7$ in $\bbP^6$. In this case, we can verify that $\mult_{\sigma_3(X_3)}(z) = 5$, and therefore Proposition \ref{prop: multisecant lemma for hypersurfaces} can be applied, showing that there are points $p$ in $\bbP^6$ that satisfy strict submultiplicativity of border rank for $r = 3$. In general, if Conjecture 6.8 in \cite{Fisher:GenusOnePfaffians} was true and $\mult_{\sigma_m(X_m)} (z) \leq m-2$ for some $z \in X_m$, then we could guarantee that there are points $p \in \bbP^{2m}$ that satisfy strict submultiplicativity of border rank for $r = m$.

\subsection{A curve of genus $2$ in $\bbP^4$}

We consider the curve described in Remark 3.6 of \cite{Hoff:CurvesGenus2} and we refer to this source for an extensive discussion on the ideal of curves of genus $2$ on rational normal scrolls. Let $X \subseteq \bbP^4$ be the intersection of the cone over a twisted cubic and a generic quadric hypersurface. Explicitly, in the coordinates $x_0 \vvirg x_4$ on $\bbP^4$, consider $X$ given by the equations 
\begin{equation}\label{eqn: equations genus 2}
\begin{aligned}
& x_1^2 - x_0x_2 = 0, \\
& x_2^2 - x_1x_3 = 0, \\
& x_0x_3 - x_1x_2 = 0, \\
& x_0^2 + x_1^2 + x_2^2 + x_3^2 + x_4^2 = 0.
\end{aligned}
\end{equation}

The variety $X$ is a curve of genus $2$ and degree $6$ in $\bbP^4$. From Remark \ref{rmk: degree sigma2 of a curve}, we have $\deg(\sigma_2(X)) = 8$, and indeed the equation of $\sigma_2(X)$ (obtained with a Macaulay2 script in this case -- see \cite{M2}) is 
{\small \begin{dmath*}
F = 4x_0^2x_1^6-12x_0^3x_1^4x_2+4x_0x_1^6x_2+9x_0^4x_1^2x_2^2-14x_0^2x_1^4x_2^2+x_1^6x_2^2+12x_0^3x_1^2x_2^3-8x_0x_1^4x_2^3+10x_0^2x_1^2x_2^4-2x_1^4x_2^4+4x_0x_1^2x_2^5+x_1^2x_2^6+4x_0^4x_1^3x_3+4x_0^2x_1^5x_3-6x_0^5x_1x_2x_3-4x_0^3x_1^3x_2x_3+10x_0x_1^5x_2x_3-4x_0^4x_1x_2^2x_3-20x_0^2x_1^3x_2^2x_3+4x_1^5x_2^2x_3+4x_0^3x_1x_2^3x_3-20x_0x_1^3x_2^3x_3+16x_0^2x_1x_2^4x_3-8x_1^3x_2^4x_3+10x_0x_1x_2^5x_3+4x_1x_2^6x_3+x_0^6x_3^2+2x_0^4x_1^2x_3^2+x_0^2x_1^4x_3^2+4x_0^3x_1^2x_2x_3^2+16x_0x_1^4x_2x_3^2-2x_0^4x_2^2x_3^2-20x_0^2x_1^2x_2^2x_3^2+10x_1^4x_2^2x_3^2-4x_0^3x_2^3x_3^2-20x_0x_1^2x_2^3x_3^2+x_0^2x_2^4x_3^2-14x_1^2x_2^4x_3^2+4x_0x_2^5x_3^2+4x_2^6x_3^2-4x_0^2x_1^3x_3^3+12x_0^3x_1x_2x_3^3+4x_0x_1^3x_2x_3^3+4x_0^2x_1x_2^2x_3^3+12x_1^3x_2^2x_3^3-4x_0x_1x_2^3x_3^3-12x_1x_2^4x_3^3-2x_0^4x_3^4-2x_0^2x_1^2x_3^4-4x_0x_1^2x_2x_3^4+2x_0^2x_2^2x_3^4+9x_1^2x_2^2x_3^4+4x_0x_2^3x_3^4-6x_0x_1x_2x_3^5+x_0^2x_3^6+4x_1^6x_4^2-12x_0x_1^4x_2x_4^2+6x_0^2x_1^2x_2^2x_4^2-2x_1^4x_2^2x_4^2+4x_0^3x_2^3x_4^2+4x_0^2x_2^4x_4^2-2x_1^2x_2^4x_4^2+4x_0x_2^5x_4^2+4x_2^6x_4^2+8x_0^2x_1^3x_3x_4^2+4x_1^5x_3x_4^2-12x_0^3x_1x_2x_3x_4^2-4x_0x_1^3x_2x_3x_4^2-4x_0^2x_1x_2^2x_3x_4^2-4x_0x_1x_2^3x_3x_4^2-12x_1x_2^4x_3x_4^2+2x_0^4x_3^2x_4^2+2x_0^2x_1^2x_3^2x_4^2+4x_1^4x_3^2x_4^2-4x_0x_1^2x_2x_3^2x_4^2+2x_0^2x_2^2x_3^2x_4^2+6x_1^2x_2^2x_3^2x_4^2+8x_0x_2^3x_3^2x_4^2+4x_1^3x_3^3x_4^2-12x_0x_1x_2x_3^3x_4^2+2x_0^2x_3^4x_4^2-3x_1^2x_2^2x_4^4+4x_0x_2^3x_4^4+4x_1^3x_3x_4^4-6x_0x_1x_2x_3x_4^4+x_0^2x_3^2x_4^4 
\end{dmath*}
}

Notice that $z = (1,1,1,1,2i) \in X$. We can observe that a generic line through $z$ intersects $\sigma_2(X)$ at $z$ with multiplicity $4$ (for instance the line spanned by $z$ and $(2,1,1,0,0)$). This implies $\mult_{\sigma_2(X)}(z) \leq 4$ and therefore Lemma \ref{lemma: secant multidrop} applies to a generic line through $X$, providing 

\begin{prop}\label{prop: border rank submult for genus 2}
 Let $X \subseteq \bbP^4$ be the curve with equations \eqref{eqn: equations genus 2}. Then the $X$-border rank is not multiplicative under tensor product.
\end{prop}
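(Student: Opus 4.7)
The strategy is to verify the hypotheses of Proposition \ref{prop: multisecant lemma for hypersurfaces} for $r=2$ and then apply Lemma \ref{lemma: secant multidrop}. Concretely, I would show that $\sigma_2(X)$ is a hypersurface of known degree and exhibit a point $z \in X$ at which the multiplicity of $\sigma_2(X)$ is strictly less than $\deg(\sigma_2(X))-1$. Granted this, a generic line through $z$ will be a secant with a double drop for $\sigma_3(X)$, and Lemma \ref{lemma: secant multidrop} produces a point $p \in \bbP^4$ with $\uR_X(p)=3$ and $\uR_{X^{\times k}}(p^{\otimes k}) < 3^k$ for every $k \geq 2$, which is the desired strict submultiplicativity.

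First I would verify that $\sigma_2(X)$ is a hypersurface: by Palatini's Lemma applied to an irreducible curve in $\bbP^4$, $\dim \sigma_2(X) = \min\{3,4\} = 3$. By Remark \ref{rmk: degree sigma2 of a curve} with $d=6$ and $g=2$, we obtain $\deg(\sigma_2(X)) = \frac{5 \cdot 4}{2} - 2 = 8$, consistent with the explicit equation $F$ reproduced in the text. The criterion of Proposition \ref{prop: multisecant lemma for hypersurfaces} then requires only the multiplicity bound $\mult_{\sigma_2(X)}(z) \leq 6$ at some $z \in X$.

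For the multiplicity computation I would take $z = (1,1,1,1,2i)$, which one checks directly satisfies the four equations in \eqref{eqn: equations genus 2}. To estimate $\mult_{\sigma_2(X)}(z)$ from above, I parametrize the line $L$ spanned by $z$ and the point $(2,1,1,0,0)$ as $\ell(\eps) = z + \eps (2,1,1,0,0)$, substitute into the explicit degree $8$ polynomial $F$, and read off the order of vanishing of $F\vert_L$ at $\eps = 0$. The excerpt asserts that this order equals $4$; by Remark \ref{rmk: lines and multiplicity}, for a \emph{generic} line $L'$ through $z$, the order of vanishing of $F\vert_{L'}$ at $\eps = 0$ equals exactly $\mult_{\sigma_2(X)}(z)$, and since the multiplicity is an upper-semicontinuous quantity over the choice of line, the computation on $L$ yields $\mult_{\sigma_2(X)}(z) \leq 4 < 6 = \deg(\sigma_2(X))-2$.

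The main obstacle is this multiplicity computation: it amounts to a direct but not entirely trivial substitution into the 8-th degree polynomial $F$ (which is why the excerpt notes that $F$ itself was produced using a Macaulay2 script). Once the bound $\mult_{\sigma_2(X)}(z) \leq 6$ is in hand, Proposition \ref{prop: multisecant lemma for hypersurfaces} supplies a line through $z$ meeting $\sigma_2(X)$ in at least two points $q_0, q_1$ distinct from $z$ but not contained in $\sigma_2(X)$, verifying the hypotheses of Lemma \ref{lemma: secant multidrop} with $r=2$. The lemma then yields a point $p \in \bbP^4$ with $\uR_X(p) = 3$ and, for every $k \geq 2$, $\uR_{X^{\times k}}(p^{\otimes k}) \leq \tfrac{1}{2}(3^k + 2\cdot 2^k - 1) < 3^k$, proving that the $X$-border rank is not multiplicative under tensor product.
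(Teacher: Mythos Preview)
Your proposal is correct and follows essentially the same route as the paper: pick $z=(1,1,1,1,2i)\in X$, bound $\mult_{\sigma_2(X)}(z)\leq 4$ by restricting $F$ to the line through $z$ and $(2,1,1,0,0)$, and then invoke Proposition~\ref{prop: multisecant lemma for hypersurfaces} (with $\deg\sigma_2(X)=8$) and Lemma~\ref{lemma: secant multidrop}. One small wording nit: it is the \emph{order of vanishing of $F\vert_L$} that is upper-semicontinuous in $L$ (and always $\geq \mult_{\sigma_2(X)}(z)$), not the multiplicity itself, but your conclusion $\mult_{\sigma_2(X)}(z)\leq 4$ from the specific line is exactly right.
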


In fact, Proposition \ref{prop: border rank submult for genus 2} holds for $X$-rank, as well. This will be a consequence of Corollary \ref{corol: rank vs border rank asymptotically} below: indeed, the result of Proposition \ref{prop: border rank submult for genus 2} implies that $\aR_X(p) < \uR_X(p)$ for some $p \in \bbP^4$, and therefore $\aR_X(p) < \rmR_X(p)$, providing $\rmR(p^{\otimes k}) < \rmR(p)^k$ for some $k$.

\section{The tensor asymptotic rank}\label{section:AsymptoticRank}

This section deals with the following notion of asymptotic rank:
\begin{defin}
 Let $X \subseteq \bbP^N$ be an algebraic variety and let $p \in \bbP^N$. The \emph{tensor asymptotic $X$-rank} of $p$ is 
 \begin{equation}\label{eqn: def of asymptotic tensor rank}
\aR_X(p) =  \lim_{k \to \infty} \bigl[ \rmR_{X \ttimes X} (p^{\otimes k}) \bigr]^{1/k}.
 \end{equation}
\end{defin}

We observe that the limit of the sequence $\{ \bigl[\rmR_{X \ttimes X} (p^{\otimes k}) \bigr]^{1/k} : k \in \bbN\}$ exists. This is a consequence of Fekete's Lemma (see, e.g., \cite{PolSze:ProblemsTheoremsAnalysisI}, p. 189): let $r_k = \log( \rmR_{X \ttimes X} (p^{\otimes k}))$; by submultiplicativity, we have $r_{k + \ell} \leq r_k + r_\ell$ and therefore Fekete's Lemma guarantees that the sequence $\frac{1}{k} r_k$ converges, and passing to the exponentials, we conclude that the limit in \eqref{eqn: def of asymptotic tensor rank} exists and is finite.

Our first goal is to show that the tensor asymptotic rank is the same if we consider border rank rather than rank in the definition. When $X$ is the Segre variety of rank $1$ tensors, this result is a consequence of Theorem 8 in \cite{ChrJenZui:NonMultiplicativityTensorRank} and the same interpolation argument applies whenever $X$ is a rational variety. We can apply the same idea in general, but there are varieties for which it is not possible to guarantee that the approximating curve is rational, which is necessary to write a rational parametrization and use the interpolation argument of \cite{ChrJenZui:NonMultiplicativityTensorRank}. Therefore, we use some basic ideas from intersection theory, for which we refer to Ch. 1 and Ch. 2 in \cite{EisHar:3264}. 

\begin{prop}\label{prop: asymptotic rank less than border rank}
 Let $X \subseteq \bbP^N$ be an algebraic variety and let $p \in \bbP^N$. We have $\aR_X(p) \leq \uR_X(p)$.
\end{prop}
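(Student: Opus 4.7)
The strategy is to adapt the interpolation argument from Theorem 8 of \cite{ChrJenZui:NonMultiplicativityTensorRank} so that it no longer requires the approximating arc to admit a rational parametrization, by extracting interpolation data from sections of a line bundle on an auxiliary smooth algebraic curve. Set $r = \uR_X(p)$. If $\dim \sigma_r(X) = 0$, then $\sigma_r(X) = \{p\}$, so $\rmR_X(p) = r$ and submultiplicativity of $X$-rank under tensor product gives $\aR_X(p) \leq r$ directly; thus we assume $\dim \sigma_r(X) \geq 1$.

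First, I would replace the analytic limit in the definition of $\uR_X(p)$ by a genuine algebraic arc. Intersecting $\sigma_r(X)$ with a general linear subspace through $p$ of complementary dimension yields an irreducible algebraic curve $C' \subseteq \sigma_r(X)$ containing $p$, and the density of $\sigma_r^\circ(X)$ in $\sigma_r(X)$ ensures that a general point of $C'$ has $X$-rank exactly $r$. Let $\nu \colon C \to C'$ be the normalization and $\phi \colon C \to \bbP^N$ the induced morphism from the smooth projective curve $C$, with some $c_0 \in C$ mapping to $p$. Set $L = \phi^* \mathcal{O}_{\bbP^N}(1)$, a line bundle of positive degree $D$ on $C$, and let $V \subseteq H^0(C, L)$ be the $(N+1)$-dimensional base-point-free linear system giving $\phi$.

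The geometric heart of the proof is the observation that the tensor-power morphism $\phi^{\otimes k} \colon C \to \bbP^{(N+1)^k - 1}$, $c \mapsto \phi(c)^{\otimes k}$, is given by the line bundle $L^{\otimes k}$ together with the linear system $V_k$ defined as the image of the multiplication map $V^{\otimes k} \to H^0(C, L^{\otimes k})$. This forces $\phi^{\otimes k}(C)$ to lie inside a linear subspace $\Lambda \subseteq \bbP^{(N+1)^k - 1}$ with $\dim \Lambda = \dim V_k - 1 \leq h^0(C, L^{\otimes k}) - 1 \leq kD$, the last inequality being the standard Riemann-Roch bound for line bundles on a smooth curve. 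Since $\phi^{\otimes k}(C)$ is linearly non-degenerate in $\Lambda$ by construction, one can select $M \leq kD + 1$ general points $c_1, \ldots, c_M \in C$ such that the images $\phi^{\otimes k}(c_j)$ span $\Lambda$ and each $\phi(c_j)$ lies in $\sigma_r^\circ(X)$. Writing $p^{\otimes k} = \phi^{\otimes k}(c_0) \in \Lambda$ as a linear combination of these spanning points, and using the estimate $\rmR_{(X)^{\times k}}(\phi^{\otimes k}(c_j)) \leq r^k$ coming from submultiplicativity, yields
\[
\rmR_{(X)^{\times k}}(p^{\otimes k}) \leq (kD + 1)\, r^k.
\]
Taking $k$-th roots and passing to the limit, the factor $(kD+1)^{1/k}$ tends to $1$, giving $\aR_X(p) \leq r = \uR_X(p)$.

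The main obstacle is precisely what positive genus introduces: in the rational setting of \cite{ChrJenZui:NonMultiplicativityTensorRank} one interpolates directly via a Vandermonde construction on the coordinates of a rational parametrization, which is unavailable here. The replacement is the line bundle picture above: the role of ``polynomials of degree $k$ in one variable'' is played by $H^0(C, L^{\otimes k})$, and the crucial input is that its dimension grows only linearly in $k$, which is exactly what the limit defining $\aR_X$ can absorb. A minor secondary subtlety is ensuring that the approximating arc can be taken algebraic rather than merely Euclidean-analytic, but this is automatic from density of $\sigma_r^\circ(X)$ in $\sigma_r(X)$ and the construction of $C'$ as a linear section.
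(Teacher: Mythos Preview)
Your argument is correct and follows the same structural outline as the paper's: construct an algebraic curve through $p$ inside $\sigma_r(X)$, show that the span of its diagonal image in $\bbP^{(N+1)^k-1}$ has dimension bounded linearly in $k$, and then write $p^{\otimes k}$ as a combination of $O(k)$ points on that image, each of $X^{\times k}$-rank at most $r^k$. The paper, however, implements the key linear bound by a different technical device: rather than normalizing and invoking the Riemann--Roch inequality $h^0(C,L^{\otimes k})\leq kD+1$, it keeps the (possibly singular) irreducible component $E$ of the linear section, computes $\deg\bigl(Seg(\Delta(E))\bigr)=ek$ via the push--pull formula in the Chow ring, and then uses that a curve of degree $d$ is spanned by $d+1$ generic points. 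The two bounds coincide numerically (your $D=\deg\phi^*\mathcal{O}_{\bbP^N}(1)$ equals the paper's $e$), and your route is arguably more elementary, trading intersection theory for basic facts about linear systems on a smooth curve; the paper's route has the mild advantage of never needing to pass to the normalization.

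Two small imprecisions worth fixing: the linear subspace must have dimension $\codim\sigma_r(X)+1$ (one more than complementary) to cut out a curve rather than a finite set; and such a linear section is not in general irreducible, so you should take an irreducible component through $p$ that meets $\sigma_r^\circ(X)$, as the paper does explicitly, before asserting that a general point of $C'$ has $X$-rank exactly $r$.
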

\begin{proof}
The argument in this proof is based on the fact that there exists a constant $e$ such that, for every $k$, $\rmR_{X\ttimes X}(p^{\otimes k}) \leq \uR_X(p)^k (ek+1)$. By taking the limit in $k$, we will conclude $\aR_X(p) =  \lim_{k \to \infty} \bigl[ \rmR_{X \ttimes X} (p^{\otimes k}) \bigr]^{1/k} \leq \lim_{k \to \infty} \bigl[ \uR_X(p) ^k (ek+1) \bigr]^{1/k} = \uR_X(p)$.

Suppose $ \uR_X(p) = r$, so that $p \in \sigma_r(X)$. Write $\sigma_r^\circ(X) = \{ q \in \sigma_r(X): \uR(q) = \rmR(q) = r\}$. Then $\sigma_r^\circ(X)$ is Zariski-open in $\sigma_r(X)$. Let $c = \codim(\sigma_r(X))$ and let $L$ be a generic $(c+1)$-dimensional linear subspace of $\bbP^N$ passing through $p$. Then $C = L \cap \sigma_r(X)$ is a (possibly reducible) algebraic curve; let $E$ be an irreducible component of $C$ passing through $p$ with the property that $E \cap \sigma_r^\circ(X) \neq \emptyset$ (this exists by genericity of $L$). Let $E^\circ = E \cap \sigma^\circ_r(X)$, that is Zariski-open in $E$, therefore, since $E$ is a curve, $E \setminus E^\circ$ is finite.  Let $e = \deg(E)$. Then $e+1$ generic points on $E$ span $\langle E \rangle$ (see, e.g., \cite{Shaf:BasicAlgGeom1}, Thm. 3.9). In particular, there exist $e+1$ points on $E^\circ$ such that $p$ is contained in their span; each of these points has rank $r$, so we conclude $R_X(p) \leq r(e+1)$. In this argument, the constant $e$ plays the same role as the error degree in the proof of Thm. 8 in \cite{ChrJenZui:NonMultiplicativityTensorRank}; in fact, if $E$ is a rational curve, $e$ coincides with the error degree.
 
Let $h \in A(\bbP^N)$ be the hyperplane class in the Chow ring of $\bbP^N$. We have $[E] = e h^{N-1}$ (see, e.g., Thm. 2.1 in \cite{EisHar:3264}). Consider the image of the curve $E$ under the diagonal embedding $\Delta: \bbP^N \to \bbP^N \ttimes \bbP^N$ followed by the Segre embedding $Seg : \bbP^N \ttimes \bbP^N \to \bbP^{ (N+1)^k -1}$. Denote by $H \in A^1 ( \bbP ^{(N+1)^k -1})$ the hyperplane class in $\bbP^{(N+1)^k -1}$ and by $h_j \in A^1 ( \bbP^N \ttimes \bbP^N)$ the class of the divisor cut out by a linear equation on the $j$-th factor.

We want to show that $\deg(Seg(\Delta(E)) = ek$. Since both $\Delta$ and $Seg$ are embeddings, and since $E$ is a curve, we have $\deg( Seg(\Delta(E)) = [Seg(\Delta(E))] \cdot H = Seg_* ( \Delta_*([E])) \cdot H$ (where we used the definition of pushforward of cycles via embeddings, as in \cite{EisHar:3264}, Definition 1.19). We determine this number via the push-pull formula (Theorem 1.23 in \cite{EisHar:3264}). Direct calculation provides $Seg^*(H) = h_1 + \cdots + h_k$ and $\Delta^*(h_j) = h$.  The push-pull formula (applied twice) provides
\begin{align*}
Seg_*(\Delta_*([E])) \cdot H = Seg_*(\Delta_*([E]) \cdot Seg^*(H)) = Seg_*(\Delta_*([E] \cdot \Delta^*(Seg^*(H)))).
\end{align*}
Since $\Delta$ and $Seg$ are both embeddings, we have $Seg_*(\Delta_*([E])) \cdot H  = [E] \cdot \Delta^*(Seg^*(H))$ (by identifying the component of top degree in $A(\bbP^N)$ and in $A(\bbP^{(N+1)^k-1})$ with $\bbZ$). We deduce
\[
 \deg(Seg(\Delta(E))  = [E] \cdot \Delta^*(Seg^*(H)) = e h^{N-1} \cdot \Delta^*( h_1 + \cdots + h_k) = e  h^{N-1} \cdot (kh) = ek.
\]
Therefore $ \deg( Seg(\Delta(E)) = ek$.

Now, $Seg(\Delta(E))$ is a curve of degree $ek$, with $ p^{\otimes k} \in Seg(\Delta(E)) \subseteq \sigma_r(X) ^k \subseteq \sigma_{r^k}(X)$ and $Seg(\Delta(E)) \cap \sigma^{\circ}_{r^k(X)} \supseteq Seg(\Delta(E^\circ)) \neq \emptyset$. In particular, $ek+1$ generic points in $Seg(\Delta(E^\circ)) \subseteq \sigma^{\circ}_{r^k}(X) $ span $p^{\otimes k}$ and we obtain 
\[
 \rmR_{X \ttimes X} (p^{\otimes k}) \leq r^k (ek+1) .
\]
\end{proof}

\begin{corol}\label{corol: rank vs border rank asymptotically}
  Let $X \subseteq \bbP^N$ be an algebraic variety and let $p \in \bbP^N$. Then $\aR_X(p) =  \lim_{k \to \infty} \bigl[ \uR_{X \ttimes X} (p^{\otimes k}) \bigr]^{1/k}$.
\end{corol}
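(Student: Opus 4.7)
The plan is to prove the two inequalities separately. For the easy direction, I first note that border rank is submultiplicative under tensor product, so $\log \uR_{X^{\times k}}(p^{\otimes k})$ is subadditive in $k$ and Fekete's Lemma (applied exactly as in the paragraph after the definition of $\aR_X$) ensures that $\lim_k [\uR_{X^{\times k}}(p^{\otimes k})]^{1/k}$ exists. Since $\uR \leq \rmR$ termwise, passing to the limit gives
\[
\lim_{k \to \infty} [\uR_{X^{\times k}}(p^{\otimes k})]^{1/k} \leq \aR_X(p).
\]

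For the reverse inequality, the key move is to apply Proposition \ref{prop: asymptotic rank less than border rank} not to the pair $(X,p)$, but to the pair $(X^{\times k}, p^{\otimes k})$ for each fixed $k \geq 1$: since $X^{\times k}$ (Segre-embedded) is an algebraic variety and $p^{\otimes k}$ is a point of its ambient space, Proposition \ref{prop: asymptotic rank less than border rank} yields
\[
\aR_{X^{\times k}}(p^{\otimes k}) \leq \uR_{X^{\times k}}(p^{\otimes k}).
\]
Next I would compute $\aR_{X^{\times k}}(p^{\otimes k}) = \aR_X(p)^k$. Using the natural identifications $(p^{\otimes k})^{\otimes j} = p^{\otimes kj}$ and $(X^{\times k})^{\times j} = X^{\times kj}$,
\[
\aR_{X^{\times k}}(p^{\otimes k}) = \lim_{j \to \infty} \bigl[\rmR_{X^{\times kj}}(p^{\otimes kj})\bigr]^{1/j} = \Bigl(\lim_{j \to \infty} \bigl[\rmR_{X^{\times kj}}(p^{\otimes kj})\bigr]^{1/(kj)}\Bigr)^k = \aR_X(p)^k,
\]
where the second equality uses that the sequence defining $\aR_X(p)$ already converges, so the subsequence indexed by the multiples of $k$ converges to the same limit. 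Combining the two displays yields $\aR_X(p) \leq [\uR_{X^{\times k}}(p^{\otimes k})]^{1/k}$ for every $k$, and letting $k \to \infty$ gives the desired reverse inequality.

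I do not expect a real obstacle: the substantive content has already been packaged into Proposition \ref{prop: asymptotic rank less than border rank}, and the remaining work is bookkeeping. The only point that deserves care is the identification $\aR_{X^{\times k}}(p^{\otimes k}) = \aR_X(p)^k$, which hinges on the fact that the limit defining $\aR_X(p)$ genuinely exists (via Fekete), so that every subsequence has the same limit as the full sequence.
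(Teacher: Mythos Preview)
Your proposal is correct and follows essentially the same route as the paper's proof: the easy inequality via $\uR \leq \rmR$ termwise, and the reverse by applying Proposition~\ref{prop: asymptotic rank less than border rank} to the pair $(X^{\times k}, p^{\otimes k})$, identifying $\aR_{X^{\times k}}(p^{\otimes k}) = \aR_X(p)^k$ through the subsequence argument, and then letting $k\to\infty$. The only cosmetic difference is that you make the Fekete step for the border-rank limit explicit up front, whereas the paper invokes it at the end.
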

\begin{proof}
Define temporarily $\uaR_X(p) =  \lim_{k \to \infty} \bigl[ \uR_{X \ttimes X} (p^{\otimes k}) \bigr]^{1/k}$. We are going to show $\uaR_X(p) = \aR_X(p)$. Since $\uR_X(p) \leq \rmR_X(p)$, we have $\uaR_X(p) \leq \aR_X(p)$.

Fix $\ell$ and apply Proposition \ref{prop: asymptotic rank less than border rank} to $p^{\otimes \ell}$. We have $\aR_{X \ttimes X}(p^{\otimes \ell}) \leq \uR_{X \ttimes X}(p^{\otimes \ell})$. On the other hand, $\aR_{X \ttimes X}(p^{\otimes \ell})^{1/\ell} = \aR_{X}(p)$ directly from the definition: indeed
\[
 \aR_{(X)^{\times \ell}}(p^{\otimes \ell})^{1/\ell} = \left[\lim_{k \to \infty} \bigl[ \uR_{(X)^{\times \ell k}} (p^{\otimes \ell k} ) \bigr]^{1/k}\right]^{1/\ell} = \lim_{k \to \infty} \bigl[ \uR_{(X)^{\times \ell k}} (p^{\otimes \ell k} ) \bigr]^{1/(\ell k)} = \aR_{X}(p)
 \]
as we are just considering the limit of \eqref{eqn: def of asymptotic tensor rank} on a subsequence.

Therefore, for every $\ell$, $\aR_{X}(p) \leq \aR_{X \ttimes X}(p^{\otimes \ell})^{1/\ell} \leq \uR_{X \ttimes X}(p^{\otimes \ell})^{1/\ell}$. Passing to the limit in $\ell$ (which exists by submultiplicativity and Fekete's Lemma), we conclude $\aR_{X}(p) \leq\uaR_X(p)$.
\end{proof}

A completely general consequence of multiplicativity of flattening lower bounds (Section \ref{subsec: flattenings}) is that, for every variety $X \subseteq \bbP^N$ and every point $p \in \bbP^N$, $\aR_X(p) \geq \rmR_X^{Flat}(p)$ where $\rmR_X^{Flat}(p)$ is the best lower bound on $\uR_X(p)$ that can be obtained via flattening methods. In the tensor setting, flattening methods can provide lower bounds beyond the local dimension, and this guarantees the existence of tensors whose tensor asymptotic rank is strictly higher than the local dimension. We point out that, whenever $\uR_X(p)$ is realized by an \emph{integer} flattening lower bound, then $\uR_X(p) = \aR_X(p)$. An explicit example of this phenomenon is provided in the next result.

\begin{prop}\label{prop: multiplicativity for m plus 1}
Let $m\geq 3$ and let $T \in \bbC^m \otimes \bbC^m \otimes \bbC^m$ be a generic tensor of rank $m+1$. Then $\uR(T^{\otimes k} ) = (\uR(T))^k$ for all $k \geq 1$. In particular $\aR (T) = m+1$. 
\end{prop}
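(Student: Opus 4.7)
Since $T$ has rank $m+1$ and border rank is both submultiplicative under tensor product and bounded above by rank, $\uR(T^{\otimes k})\leq \uR(T)^k \leq (m+1)^k$ for every $k\geq 1$. The content of the statement is therefore the matching lower bound, which will in particular force $\uR(T)=m+1$ and $\aR(T)=m+1$.

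My argument relies on Strassen's flattening $(\cdot)_B^{\wedge A}:A\otimes B^*\to\Lambda^2 A\otimes C$ from \eqref{eqn: strassen's flattening}. Evaluated on a rank-$1$ tensor $a\otimes b\otimes c$, this map sends $a'\otimes\beta$ to $\beta(b)(a'\wedge a)\otimes c$, whose image $(A\wedge a)\otimes c$ has dimension $m-1$. Hence the constant $r_0$ of \eqref{eqn: flattening lower bound} attached to this flattening is $m-1$, so every rank-$(m+1)$ tensor satisfies the a priori bound $\rank(T_B^{\wedge A})\leq(m+1)(m-1)=m^2-1$. The crux of the proof is the claim that for a generic rank-$(m+1)$ tensor $T$ this inequality is an equality, $\rank(T_B^{\wedge A})=m^2-1$. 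Granting this, \eqref{eqn: flattening lower bound} immediately gives $\uR(T)\geq(m^2-1)/(m-1)=m+1$. Moreover, the Kronecker tensor product of $k$ copies of $T_B^{\wedge A}$ has rank $(m^2-1)^k$, while the $r_0$-value on a rank-$1$ tensor of the Cartesian power $X^{\times k}$ is $(m-1)^k$, where $X$ denotes the Segre variety; applying \eqref{eqn: multiplicative flattening lower bounds} iteratively therefore yields
\[
 \uR_{X^{\times k}}(T^{\otimes k}) \;\geq\; \frac{(m^2-1)^k}{(m-1)^k} \;=\; (m+1)^k,
\]
which matches the upper bound, and the asymptotic statement then follows from \eqref{eqn: def of asymptotic tensor rank}.

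To establish the claim I would use upper semicontinuity of matrix rank: the locus of tensors in $\sigma_{m+1}(X)$ on which $\rank(T_B^{\wedge A})<m^2-1$ is Zariski-closed, and since $\sigma_{m+1}(X)$ is irreducible, it suffices to exhibit one rank-$(m+1)$ tensor attaining the bound. A natural candidate is the $\frakS_m$-symmetric
\[
 T_0 = \textsum_{i=1}^m a_i\otimes b_i\otimes c_i + \bigl(\textsum_i a_i\bigr)\otimes\bigl(\textsum_i b_i\bigr)\otimes\bigl(\textsum_i c_i\bigr),
\]
whose Strassen flattening, written in the bases $\{a_i\otimes\beta_j\}$ and $\{a_i\wedge a_j\otimes c_k\}$, has manageable block structure inherited from its symmetry. \textbf{The main obstacle} is verifying $\dim\ker(T_0)_B^{\wedge A}=1$. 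I expect to handle this via the $\frakS_m$-isotypic decomposition of $A\otimes B^*$: the kernel is a $\frakS_m$-subrepresentation, and the structure of $T_0$ should force all nontrivial isotypic summands to embed into $\Lambda^2 A\otimes C$, leaving only a single one-dimensional line in the kernel. For $m=3$ the verification reduces to a direct $9\times 9$ rank check in the spirit of \eqref{eqn: Strassen flattening matrix}; for larger $m$ one could extend inductively by writing $(T_0)_B^{\wedge A}$ as the $m=3$ block augmented by a transparently nondegenerate complement indexed by $\{4,\ldots,m\}$.
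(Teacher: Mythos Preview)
Your overall approach matches the paper's exactly: same upper bound, same Strassen flattening, same specific tensor $T_0$, same appeal to upper semicontinuity, and the same use of \eqref{eqn: multiplicative flattening lower bounds} to push the bound through tensor powers. The only substantive gap is the step you yourself flag as the main obstacle: you do not actually establish $\rank(T_0)_B^{\wedge A}=m^2-1$, and the two routes you sketch (an $\frakS_m$-isotypic analysis of the kernel, or an induction from $m=3$) are left unexecuted.

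The paper closes this gap by a direct image computation that is considerably simpler than either route you propose, and needs no representation theory or induction. One computes
\[
 (T_0)_B^{\wedge A}(a_i\otimes\beta_j)=a_i\wedge a_j\otimes c_j + a_i\wedge\bigl(\textsum_\ell a_\ell\bigr)\otimes\bigl(\textsum_\ell c_\ell\bigr),
\]
and observes that the second summand is independent of $j$. Hence for $i\neq j$,
\[
 (T_0)_B^{\wedge A}\bigl(a_i\otimes(\beta_j-\beta_i)\bigr)=a_i\wedge a_j\otimes c_j,
\]
so all $2\binom{m}{2}=m^2-m$ vectors $a_i\wedge a_j\otimes c_j$ (with $i\neq j$) lie in the image. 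Modulo the span of these, each $(T_0)_B^{\wedge A}(a_j\otimes\beta_k)$ is congruent to $a_j\wedge(\sum_\ell a_\ell)\otimes(\sum_\ell c_\ell)$, and as $j$ varies these span an $(m-1)$-dimensional space. This gives $\rank(T_0)_B^{\wedge A}\geq (m^2-m)+(m-1)=m^2-1$, matching the a priori upper bound. With this computation in hand, your proof is complete and identical to the paper's.
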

\begin{proof}
We write $T \in A \otimes B \otimes C$ with $\dim A = \dim B = \dim C = m$ and we fix bases of $\{ a_i \}, \{b_i\}, \{c_i\}$ of $A,B$ and $C$ respectively as in Section \ref{section: P2xP2xP2 basic} with corresponding dual bases $\{\alpha_i\},\{\beta_i\}$ and $\{\gamma_i\}$. Since border rank is upper semicontinuous and it suffices to prove the statement for 
\[
T = a_1 \otimes b_1 \otimes c_1 + \cdots + a_{m} \otimes b_{m} \otimes c_{m} + (\textsum a_i)\otimes ( \textsum b_i) \otimes (\textsum c_i). 
\]
We consider the first Koszul flattening of $T$ as in \eqref{eqn: strassen's flattening}, namely the standard flattening augmented with the identity on a copy of $A$, followed by the projection of the factor $A \otimes A$ on the skew-symmetric component $\Lambda^2 A$:
\begin{align*}
 T^{\wedge A}_{B} : A \otimes B^* \to \Lambda^2 A \otimes C.
\end{align*}
If $Z$ is a rank $1$ tensor, then $\rank(Z^{\wedge A}_B) = m-1$, so the Koszul flattening provides the lower bound $\uR (T) \geq \dfrac{\rank (T^{\wedge A}_B )}{m-1}$. We will show that $\rank (T^{\wedge A}_B) = m^2 -1 = (m+1)(m-1)$. This gives the multiplicative lower bound $\uR(T) \geq \frac{m^2-1}{m-1} = m+1$ by \eqref{eqn: multiplicative flattening lower bounds}.

For every $i,j$, we have  
\[
 T^{\wedge A}_B ( a_i \otimes \beta_j) = a_i \wedge a_j \otimes c_j + a_i \wedge (\textsum a_\ell) \otimes (\textsum c_\ell), 
\]
where the first summand is $0$ if $i=j$.

Observe that for every $i\neq j$, we have $a_i \wedge a_j \otimes c_j \in \Im T^{\wedge A}_B$. Indeed, we have
\begin{align*}
 T^{\wedge A}_B &( a_i \otimes (\beta_j - \beta_i)) = T^{\wedge A}_B ( a_i  \otimes \beta_j) - T^{\wedge A}_B ( a_i  \otimes \beta_i) \\ 
 &= \left[a_i \wedge a_j \otimes c_j + a_i \wedge \left( \textsum_1^m a_\ell\right) \otimes \left( \textsum_1^m c_\ell\right)\right] - \left[a_i \wedge \left( \textsum_1^m a_\ell\right) \otimes \left( \textsum_1^m c_\ell\right) \right]= \\ 
 & =a_i \wedge a_j \otimes c_j.
\end{align*}
In particular, we have $\langle a_i \wedge a_j \otimes c_j : i,j = 1 \vvirg m \rangle \subseteq \Im T_B^{\wedge A}$, showing $\rank( T_B^{\wedge A} ) \geq 2 \binom{m}{2} = m^2 - m$.

Passing to the quotient modulo $\langle a_i \wedge a_j \otimes c_j : i,j = 1 \vvirg m \rangle$, for every $k = 1 \vvirg m$, we have 
\[
 T^{\wedge A}_B(a_j \otimes \beta_k) \equiv a_j \wedge (\textsum a_\ell) \otimes (\textsum c_\ell).
\]
Observe that these span an $(m-1)$-dimensional space modulo $\langle a_i \wedge a_j \otimes c_j : i,j = 1 \vvirg m \rangle$. We obtain $\rank (T_B^{\wedge A}) \geq m-1 + m^2 - m = m^2 - 1$ and this concludes the proof.
\end{proof}

We conclude this section showing that in the setting of Lemma \ref{lemma: secant multidrop}, if $\uR_X(p) = r+1$, we obtain upper bounds for $\aR_X(p)$ that are strictly smaller than $r+1$.

\begin{remark}
 Let $X \subseteq \bbP^N$ be an algebraic variety and let $z \in X$ and $p \in \bbP^N$ with $\uR_X(p) = r+1$ and $\uR_X(p - z) = \uR_X(p-2z) = r$. Then from Lemma \ref{lemma: secant multidrop}, we have $\uR_X(p^{\otimes k}) \leq \frac{1}{2} ( (r+1)^k + 2r^k - (r-1)^k)$ from which, for every $k \geq 2$, we have
 \begin{equation}\label{eqn: bound on asymptotic}
  \aR_X(p) \leq \left[ \frac{1}{2} ( (r+1)^k + 2r^k - (r-1)^k) \right]^{1/k}.
 \end{equation}
 Let $B(r,k)$ be the right-hand side of \eqref{eqn: bound on asymptotic}. For fixed $r$, $B(r,k)$ is eventually increasing as a function of $k$ and converges to $r+1$. In particular, there is a value $\kappa_r$ for which $B(r,\kappa_r) = \min_{k} (B(r,k))$, which realizes the best possible bound for $ \aR_X(p)$ with the only use of Lemma \ref{lemma: secant multidrop}. We expect $\kappa_r$ to be an increasing function of $r$.
 
The following table records the values of $B(r,k)$ for different values of $k$ when $r = 4$; this is the setting of the tensor $T$ in Proposition \ref{prop: explicit drop for C3 ot C3 ot C3}:
\[
 \begin{array}{c|c}
k & B(4,k) \\ \midrule
1 & 5 \\
2 & 4.898979486 \\
3 & 4.834588127 \\
4 & 4.793563454 \\
5 & 4.768297954\\
6 & 4.754002287 \\
7 & 4.747451133 \\ 
8 & 4.746368884 \\ 
9 & 4.749102849 \\
10 & 4.754435059  
 \end{array}
\]
This shows $4.5 \leq \rmR^{Flat}(T) \leq \aR(T) \leq 4.746368884$ where $T$ is the tensor of Proposition \ref{prop: explicit drop for C3 ot C3 ot C3}.

More generally, for every $p \in \bbP^N$ that arises from Lemma \ref{lemma: secant multidrop}, we have $\rmR_X^{Flat}(p) \leq \aR_X(p) \leq B(r,\kappa_r)$. We record in the following table the values of $\kappa_r$ and the corresponding bound $B(r,\kappa_r)$ for $r = 1 \vvirg 50$ (the decimal expansions have $9$ significant digits). We observe that the value $\kappa_r$ matches the sequence \texttt{A186326} in \cite{OEIS} (this has been checked for $r \leq 100$).
\[
  \begin{array}{c||c|c}
r   & \kappa_r & B(r,\kappa_r) \\ \midrule
1 & 3 & 1.7099759467 \\
2 & 5 & 2.7348800685 \\ 
3 & 6 & 3.7418846152 \\ 
4 & 8 & 4.746368884 \\ 
5 & 9 & 5.7490740939 \\ 
6 & 11 & 6.7507695302 \\ 
7 & 12 & 7.7522561776 \\ 
8 & 14 & 8.7530862563 \\ 
9 & 16 & 9.7539245075 \\ 
10 & 17 & 10.7545150388 \\ 
\end{array}\]
\end{remark}

\section{Other varieties and numerical results}\label{section: other varieties}

There are a number of varieties to which one can potentially apply Lemma \ref{lemma: secant multidrop} and Proposition \ref{prop: multisecant lemma for hypersurfaces}. Every potential example presents two challenges: it is in general hard to determine the degree of the hypersurface, and of course even harder to determine its equation; even in cases where it is possible to give some information about the equation, determining the multiplicity of a point of rank $1$ inside the hypersurface might be difficult. Moreover, there are cases where the secant multidrop argument cannot be applied, in the sense that, in the notation of Proposition \ref{prop: multisecant lemma for hypersurfaces}, $\mult_z(S) = \deg(S) -1$.

This section is dedicated to these additional examples, where either the argument cannot be applied, or we are not able to provide enough information to apply it. In this analysis, we provide some experimental data obtained via numerical algebraic geometry: in particular, degrees of varieties and multiplicities of singular points can be computed numerically using methods based on monodromy (see, e.g., \cite[Ch. 10]{BaHaSoWa:BertiniBook}); in our case, we used the software Bertini \cite{BatHauSomWam:BertiniSoftware} with an algorithm based on the one used in \cite{OedSam:5thSecant5P1s}, enhanced with trace test (see \cite{LeyRodSot:TraceTest}).

\subsection{Variety of singular matrices}

Let $X$ be the variety of rank $1$ $n\times n$ matrices in $\bbP Mat_n$, where $Mat_n$ is the space of $n\times n$ matrices with complex coefficients. We use coordinates $x_{ij}$ on $Mat_n$, corresponding to the entries of a the matrix. The secant variety $\sigma_r(X)$ consists of matrices of rank at most $r$. In particular $\sigma_{n-1}(X)$ is a hypersurface of degree $n$, cut out by the determinant polynomial; it is called the \emph{determinantal hypersurface} in $Mat_n$. Denote $S = \sigma_{n-1}(X)$. It is a standard exercise to show that the multiplicity of a rank $1$ matrix in the determinantal hypersurface is $n-1$ (see \cite[Ch.II]{ArCoGrHa:Vol1} for details). This shows that Lemma \ref{lemma: secant multidrop} cannot be applied to the variety $S$. Indeed, it is classically known that in this case border rank coincides with matrix rank and in particular it is equal to the standard flattening lower bound as explained in Section \ref{subsec: flattenings}, because the identity map on $Mat_n$ defines a flattening map.

The same argument applies whenever a secant variety of $X$ is a hypersurface with a determinantal expression $\det(A(\bfx))$ such that $\rank(A(z)) =1$ for every $z \in X$. In this case, the denominator in Equation \eqref{eqn: flattening lower bound} is $r_0 = 1$, and the flattening lower bound is attained by a generic point. Border rank multiplicativity follows by multiplicativity of flattening lower bounds. Indeed, in this case the tensor asymptotic rank and the border rank coincide. We give some examples to illustrate this phenomenon.

\begin{itemize}
\dotitem the $\left(\frac{d}{2}\right)$-th secant variety of the rational normal curve in $\bbP^d$ for $d = 2\delta$ even: $S = \sigma_\delta(\nu_{2\delta}(\bbP^1)) \subseteq \bbP S^d \bbC^2$: we have $\deg(S) = \delta+1$ and its equation is the determinant of the \emph{catalecticant} flattening $f \mapsto f_{\delta,\delta}$ where $f \in S^d \bbC^2$ defines the map $f_{\delta,\delta} : S^\delta \bbC^{2*} \to S^\delta \bbC^2$ between two spaces of dimension $\delta+1$.

 \dotitem the $9$-th secant variety of the $6$-Veronese embedding of $\bbP^2$: $S = \sigma_9(\nu_6(\bbP^2)) \subseteq \bbP (S^6 \bbC^3)$: we have $\deg (S) = 10$ and its equation is the determinant of the \emph{catalecticant} flattening $f \mapsto f_{3,3}$ where $f \in S^6\bbC^3$ defines the map $f_{3,3} : S^3 \bbC^{3*} \to S^3 \bbC^3$ between two spaces of dimension $10$.
 
 \dotitem the $5$-th secant variety of the $(3,3)$-Segre-Veronese embedding of $\bbP^1 \times \bbP^1$: $S = \sigma_5( \nu_{3,3}(\bbP^1 \times \bbP^1)) \subseteq \bbP (S^3 \bbC^2 \otimes S^3 \bbC^2)$: we have $\deg(S) =6$ and its equation is the determinant of the flattening $T \mapsto T_{(1,2),(2,1)}$ where $T \in S^3 \bbC^2 \otimes S^3 \bbC^2$ defines the map $T_{(1,2),(2,1)} : \bbC^{2*} \otimes S^2 \bbC^{2*} \mapsto S^2 \bbC^{2} \otimes \bbC^{2}$ between two spaces of dimension $6$.
\end{itemize}

A similar case is $\sigma_3(\nu_3(\bbP^2))$, which is a hypersurface of degree $4$ in $\bbP( S^3 \bbC^3)$; its equation is classically known as Aronhold invariant and it arises as any of the nine Pfaffians of size $8$ of Koszul-type flattening $\bbC^3 \otimes \bbC^3 \xto{\id_3 \otimes f_{1,2} } \bbC^3 \otimes S^2 \bbC^3 \xto {\rho_{1,2}} \Lambda^2 \bbC^3 \otimes \bbC^3$ where $f_{1,2}$ is a catalecticant flattening and $\rho_{p,q}$ is the (transpose of the) classical Koszul map, namely $\rho_{p,q}: \Lambda^p U \otimes S^q U \to \Lambda^{p+1} U \otimes S^{q-1} U$ for a vector space $U$ (see, e.g., \cite{Eis:CommutativeAlgebra}, Ch. 17). It is immediate to verify that $\mult_{\sigma_3(\nu_3(\bbP^2))} (z) = 3$ for every $z \in \nu_3(\bbP^2)$ and therefore Lemma \ref{lemma: secant multidrop} cannot be applied.

Many more examples can be constructed in a similar way.

\subsection{Other secant hypersurfaces}

In this last section, we discuss some examples that, in our opinion, might be interesting to investigate further. In particular, these are examples of varieties $X \subseteq \bbP^N$ with the property that $\sigma_r(X)$ is a hypersurface for some $r$. In some cases, we know the degree of the hypersurface $\sigma_r(X)$, either for theoretical reasons or via numerical algebraic geometry. In these cases we are not able to determine the equation of $\sigma_r(X)$ or the value of $\mult_{\sigma_r(X)}(z)$ for $z \in X$, and consequently we are not able to apply Proposition \ref{prop: multisecant lemma for hypersurfaces}. However, we believe that these cases can lead to further examples of strict submultiplicativity, as in the cases presented in Section \ref{section: two hypersurface examples}.

Let $X \subseteq \bbP^N$ be an algebraic variety. There is a standard parameter count that gives an upper bound for the dimension of $\sigma_r(X)$, that is $\min\{ N,  r(\dim (X) + 1) -1\}$; this upper bound is called the expected dimension of $\sigma_r(X)$. In the following examples, all secant varieties of the variety $X$ have the expected dimension.

\begin{enumerate}[(i)]
\item Let $X \subseteq \bbP^{2m}$ be a nondegenerate curve. Then $\sigma_m(X)$ is a hypersurface in $\bbP^{2m}$. In Section \ref{section: two hypersurface examples}, we saw some cases for which Proposition \ref{prop: multisecant lemma for hypersurfaces} provided examples of strict submultiplicativity of $X$-border rank. 

 \item Let $X = \nu_{3k}(\bbP^2) \subseteq \bbP S^{3k}(\bbC^3)$ and $r = 3 \binom{k}{2}$; then $\sigma_r(X)$ is a hypersurface in $\bbP S^{3k}(\bbC^3)$. For $k = 1,2$, we have $\mult_{\sigma_r(X)} (z) = \deg( \sigma_r(X) ) - 1$ for every $z \in X$, as explained in the previous section, so Proposition \ref{prop: multisecant lemma for hypersurfaces} cannot be applied. For $k=3$, we obtain numerically $\deg(\sigma_{18}(X)) = 1292$ and we have numerical evidence suggesting that $\mult_{\sigma_{18}(X)}(z) =1215$ for $z \in X$, so Lemma \ref{lemma: secant multidrop} provides examples of strict submultiplicativity of border rank with $\uR_X(p) = 19$. We expect that for $k \geq 3$, a generic line through $X$ has many points of rank $r$, providing several examples of strict submultiplicativity.
 
 \item Let $X = Seg( (\bbP^1) ^{\times a})$ with $2^a -1$ divisible by $a+1$ (namely $a+1$ is a $2$-Fermat pseudoprime) and let $r = \frac{2^a -1}{a+1}$ ($a \neq 4$); then $\sigma_r(X)$ is a hypersurface in $\bbP ( (\bbC^{2})^{\otimes a})$. If $a = 2$, then $r = 1$ and $X$ is the quadric $\bbP^1 \times \bbP^1$ in $\bbP^3$. When $a = 6$, then $r = 9$ and $\sigma_9((\bbP^1) ^{\times 6})$ is a hypersurface in $\bbP^{63}$. We have numerical evidence that $\deg( \sigma_9((\bbP^1) ^{\times 6}) ) \geq 1.4 \cdot 10^5$ and surprisingly it seems from the numerical calculations that $\mult_{z} (\sigma_9((\bbP^1) ^{\times 6}) ) = \deg( \sigma_9((\bbP^1) ^{\times 6})) -1$, so that it might not be possible to apply Proposition \ref{prop: multisecant lemma for hypersurfaces} to this class of examples.
 
 \item Let $X = \nu_{3,3a}(\bbP^1 \times \bbP^1) \subseteq \bbP( S^{3} \bbC^2 \otimes S^{3a} \bbC^2)$ and let $r = 4a + 1$; then $\sigma_r(X)$ is a hypersurface. For $a=1$, we have $\mult_{\sigma_5(X)} (z) = \deg( \sigma_5(X) ) - 1$ for every $z \in X$ (as explained in the previous section) so Proposition \ref{prop: multisecant lemma for hypersurfaces} cannot be applied. For $(a,b) = (1,2)$, we obtain numerically $\deg(\sigma_{9}(X) )= 28$ and we have numerical evidence suggesting that $\mult_{\sigma_9(X)}(z) = 25$ for $z \in X$, so Lemma \ref{lemma: secant multidrop} provides examples of strict submultiplicative of border rank with $\uR_X(p) = 10$. For $(a,b) = (1,3)$, we obtain numerically $\deg(\sigma_{13}(X)) = 102$ and we have numerical evidence suggesting that $\mult_{z}(\sigma_{13}(X)) = 94$ for $z \in X$, so Lemma \ref{lemma: secant multidrop} provides examples of strict submultiplicativity of border rank with $\uR_X(p) = 14$. We expect that a similar situation occurs for higher values of $a$ providing several cases in which Lemma \ref{lemma: secant multidrop} can be applied.
 \end{enumerate}

Many more examples are available and can be approached with similar techniques.

\bibliographystyle{amsalpha}
\bibliography{bibfile}

 \end{document}